\newcommand{\llangle}{\langle\hspace{-2.5pt}\langle}
\newcommand{\rrangle}{\rangle\hspace{-2.5pt}\rangle}
\newtheorem{theorem}{Theorem}
\newtheorem{lemma}[theorem]{Lemma}
\newtheorem{proposition}[theorem]{Proposition}
\theoremstyle{definition}
\newtheorem{definition}[theorem]{Definition}
\title{Max filtering with reflection groups}
\author{Dustin~G.~Mixon\footnote{Department of Mathematics, The Ohio State University, Columbus, OH} \footnote{Translational Data Analytics Institute, The Ohio State University, Columbus, OH}
\quad
Daniel~Packer\footnotemark[1]
}
\date{}
\begin{document}
\maketitle

\begin{abstract}
Given a finite-dimensional real inner product space $V$ and a finite subgroup $G$ of linear isometries, max filtering affords a bilipschitz Euclidean embedding of the orbit space $V/G$.
We identify the max filtering maps of minimum distortion in the setting where $G$ is a reflection group.
Our analysis involves an interplay between Coxeter's classification and semidefinite programming.
\end{abstract}

\section{Introduction}

Machine learning often requires an object to be represented as a point $x$ in a vector space $V$.
In many settings, there is a linear group $G$ of ambiguities for which the same object can be represented as $gx$ for any $g\in G$.
For example, when the object is a point cloud or a graph, the vector representation depends on how the points or vertices are labeled.
One is inclined to apply a $G$-invariant feature map $\Phi\colon V\to F$ to factor out these ambiguities before training a machine learning model.
Recently, \cite{CahillIMP:22}~introduced such a feature map that is well suited for groups of linear isometries:

\begin{definition}
Fix a real inner product space $V$ and a group $G\leq\operatorname{O}(V)$.
\begin{itemize}
\item[(a)]
The \textbf{max filtering map} $\llangle\cdot,\cdot\rrangle\colon V/G\times V/G\to\mathbb{R}$ is defined by
\[
\llangle G\cdot x,G\cdot y\rrangle
:=\sup_{\substack{p\in G\cdot x\\q\in G\cdot y}}\langle p,q\rangle.
\]
\item[(b)]
Given $\{t_i\}_{i=1}^n$ in $V$, the corresponding \textbf{max filter bank} $\Phi\colon V/G\to\mathbb{R}^n$ is defined by
\[
\Phi(G\cdot x):=\{\llangle G\cdot t_i,G\cdot x\rrangle\}_{i=1}^n.
\]
\end{itemize}
\end{definition}

The vectors $\{t_i\}_{i=1}^n$ that determine a max filter bank are known as \textbf{templates}.
Max filter banks offer a noteworthy contrast to the polynomial invariants that are typically studied in the literature~\cite{BandeiraBKPWW:17,PerryWBRS:19,CahillCC:20,BendoryELS:22,BalanHS:22}.
In particular, every $G$-invariant polynomial map $V\to\mathbb{R}^n$ is either affine linear or not Lipschitz.
As a result, polynomial invariants are doomed to either fail to separate orbits or fail to exhibit numerical stability.
Meanwhile, \cite{CahillIMP:22}~establishes that for every finite $G\leq\operatorname{O}(d)$, there exists a \textit{bilipschitz} max filter bank $\mathbb{R}^d/G\to\mathbb{R}^n$.
We enunciate the quantitative details of this result in terms of the following definition:

\begin{definition}
Given a group $G\leq\operatorname{O}(d)$ whose action on $\mathbb{R}^d$ has closed orbits, the \textbf{optimal max filtering condition number} $\kappa(G)$ is the infimum of quotients $C/c$ for which there exist $n\in\mathbb{N}$ and templates $\{t_i\}_{i=1}^n$ in $\mathbb{R}^d$ such that the corresponding max filter bank (as a map from $\mathbb{R}^d/G$ with the quotient Euclidean metric to $\mathbb{R}^n$ with the Euclidean metric) has upper and lower Lipschitz bounds $C$ and $c$, respectively.
\end{definition}

\begin{proposition}[see Theorem~18 in~\cite{CahillIMP:22}]
\label{prop.general bound on kappa}
There exists a universal constant $C>0$ such that for every $d,m\in\mathbb{N}$ and every finite group $G\leq\operatorname{O}(d)$ of order $m$, it holds that 
\[
\kappa(G)
\leq Cm^3d^{1/2}(d\log d+d\log m+\log^2m)^{1/2}.
\]
\end{proposition}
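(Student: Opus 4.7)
The plan is to establish existence of suitable templates via the probabilistic method: draw $t_1,\ldots,t_n$ independently as standard Gaussians in $\mathbb{R}^d$ and verify that for an appropriately chosen $n$, the resulting max filter bank $\Phi$ has upper and lower Lipschitz constants $C$ and $c$ with $C/c$ bounded by the claimed quantity. The upper Lipschitz bound is essentially free, while the lower bound requires real work.

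\textbf{Upper bound.} For any template $t$, the coordinate map $x \mapsto \max_{g\in G}\langle gt,x\rangle$ is $\|t\|$-Lipschitz (as a maximum of linear functionals of operator norm $\|t\|$), and this descends to the quotient. Hence $\Phi$ has Lipschitz constant at most $\bigl(\sum_i \|t_i\|^2\bigr)^{1/2}$, which is of order $\sqrt{nd}$ with high probability by $\chi^2$-concentration.

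\textbf{Lower bound.} For fixed unit representatives $x,y$ with $\delta := d(G\cdot x,G\cdot y)$, define $D(t) := \max_{g\in G}\langle gt,x\rangle - \max_{g\in G}\langle gt,y\rangle$. The first step is a pointwise Gaussian estimate $\mathbb{E}_{t}[D(t)^2] \gtrsim \delta^2/p(m)$ for some polynomial $p$: partition $\mathbb{R}^d$ into at most $m^2$ cones indexed by the argmax pair $(g_x,g_y)$, on each of which $D$ is linear in $t$ with coefficient vector $g_x^{-1}x - g_y^{-1}y$ of norm at least $\|x - g_x g_y^{-1}y\|\geq\delta$. Selecting the cone indexed by the pair realizing the quotient distance and lower-bounding its Gaussian measure yields the pointwise bound. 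Gaussian concentration applied to the $2$-Lipschitz function $t\mapsto D(t)$, combined with a union bound over an $\epsilon$-net for $(S^{d-1}/G)\times(S^{d-1}/G)$ of log-cardinality $O(d\log(1/\epsilon)+\log m)$ per factor, promotes this to a uniform bound; balancing $n$ and $\epsilon$ produces the stated ratio $C/c$.

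\textbf{Main obstacle.} The crux is the polynomial-in-$m$ pointwise bound. The maximum is non-smooth, so naive linearization fails; one must argue via the cone decomposition and extract a lower bound of order $m^{-O(1)}$ on the Gaussian mass of a witness cone in which the maxima are attained ``as intended'' for both $x$ and $y$ simultaneously. Tracking constants, the $m^3$ factor in the final $\kappa(G)$ estimate traces to the combined degradation from this cone measure, the alignment of the Gaussian direction with the coefficient vector $g_x^{-1}x-g_y^{-1}y$, and the $\sqrt{\log m}$ tail inherent to the maximum of $m$ Gaussian linear forms that propagates into the concentration constants.
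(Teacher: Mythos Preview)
The paper does not prove this proposition at all; it is quoted verbatim from Theorem~18 of \cite{CahillIMP:22} and stated without argument, so there is no proof in the present paper to compare your proposal against.

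As a standalone sketch, your outline is in the right spirit---random Gaussian templates, a pointwise second-moment lower bound for $D(t)$, then concentration plus an $\epsilon$-net---but the step you yourself label the ``main obstacle'' is a genuine gap, not merely a detail to be filled in. You assert that the Gaussian measure of the cone on which both argmaxes $(g_x,g_y)$ are simultaneously attained is at least $m^{-O(1)}$, yet you provide no mechanism for this. That cone is cut out by $2(m-1)$ homogeneous linear inequalities, and such an intersection can in general have Gaussian mass exponentially small in $m$. Rotation invariance of the Gaussian together with the $G$-action does give that each \emph{single} argmax cone $\{t:\arg\max_{g}\langle gt,x\rangle=g_x\}$ has measure exactly $1/m$, but the intersection of one such cone for $x$ with one for $y$ carries no automatic polynomial lower bound. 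Obtaining $\mathbb{E}_t[D(t)^2]\gtrsim \delta^2/\mathrm{poly}(m)$ is precisely the substantive content of the cited theorem, and your proposal does not supply the idea that makes it work; the accounting you give for the $m^3$ factor is narrative rather than an argument.
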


Proposition~\ref{prop.general bound on kappa} is the first of its kind, but it is not sharp and thus warrants further investigation.
In this paper, we derive the exact value of $\kappa(G)$ for every finite reflection group $G$.
To compare with Proposition~\ref{prop.general bound on kappa}, an easy-to-state consequence of our main results is that
\[
\kappa(G)\leq Cm
\]
whenever $G$ is a finite reflection group.
While this upper bound is saturated by the dihedral groups, $\kappa(G)$ is typically much smaller, frequently being on the order of $d$.
In addition to determining $\kappa(G)$ for each finite reflection group $G$, we also identify $n=d$ max filtering templates that attain this optimal condition number.

In the next section, we review some preliminaries on reflection groups.
In Section~\ref{sec.main results}, we formulate our main results and describe how our problem reduces to the study of Weyl chambers of essential and irreducible finite reflection groups.
Sections~\ref{sec.proof of thm.main} and~\ref{sec.proof of thm.asymptotic} contain the proofs of our main results.

\section{Background on reflection groups}

This section reviews basic information about reflection groups.
We encourage the interested reader to consult~\cite{Kane:01} for more information.

A \textbf{reflection} is an orthogonal transformation with exactly one negative eigenvalue.
Every nonzero vector $u\in\mathbb{R}^d$ determines a reflection with matrix representation
\[
R(u):=I-\frac{2}{\|u\|^2}uu^\top,
\]
where $I$ denotes the identity matrix.
A \textbf{reflection group} is any group that is generated by a set of reflections, implying that it is a subgroup of the orthogonal group $\operatorname{O}(d)$.
For example, the group of permutation matrices is generated by the transposition matrices $R(e_i-e_j)$, where $e_i$ and $e_j$ denote standard basis elements of $\mathbb{R}^d$.
Given nonzero vectors $\{u_i\}_{i=1}^k$, the fixed points of the reflection group generated by $\{R(u_i)\}_{i=1}^k$ form the orthogonal complement of $\operatorname{span}\{u_i\}_{i=1}^k$.
We say a reflection group is \textbf{essential} if the origin is its only fixed point.
The group of permutation matrices fixes the all-ones vector, and so this reflection group is not essential in $\mathbb{R}^d$ (however, it is essential when interpreted as a reflection group on the orthogonal complement of the all-ones vector).

Given a finite reflection group $G\leq\operatorname{O}(d)$, let $U$ denote the union of the hyperplanes fixed by each individual reflection in $G$.
The connected components of $\mathbb{R}^d\setminus U$ are known as the \textbf{Weyl chambers} of $G$.
Fix a Weyl chamber $C\subseteq\mathbb{R}^d$.
It turns out that $C$ is an open set whose boundary is contained in a union of $\ell\in\mathbb{N}$ hyperplanes corresponding to reflections in $G$ (and no fewer).
We denote these reflections by $\{R(\alpha_i)\}_{i=1}^\ell$ for unit vectors $\{\alpha_i\}_{i=1}^\ell$, which are signed so that they reside in the dual cone of $C$.
Then $\{\alpha_i\}_{i=1}^\ell$ is known as the \textbf{fundamental system} of $G$ corresponding to $C$.
Interestingly, the reflections $\{R(\alpha_i)\}_{i=1}^\ell$ generate the original reflection group $G$.
In the case where $G$ is essential, we have $\ell=d$, and furthermore, the dual basis $\{\beta_i\}_{i=1}^d$ of the fundamental system $\{\alpha_i\}_{i=1}^d$ generates the closed convex cone $\overline{C}$, i.e., $\overline{C}$ is an example of a simplicial cone.

As an example, consider the symmetries of the square, i.e., the dihedral group of order~$8$.
Figure~\ref{fig.weyl_chamber} illustrates the $1$-eigenspaces (in this case, lines) of the reflections in this group.
These lines carve the plane into eight Weyl chambers, one of which we call $C$.
The corresponding fundamental system consists of the unit vectors $\alpha_1:=(\frac{1}{\sqrt{2}},-\frac{1}{\sqrt{2}})$ and $\alpha_2:=(0,1)$, and the resulting dual basis, given by $\beta_1:=(\sqrt{2},0)$ and $\beta_2:=(1,1)$, generates $\overline{C}$. 

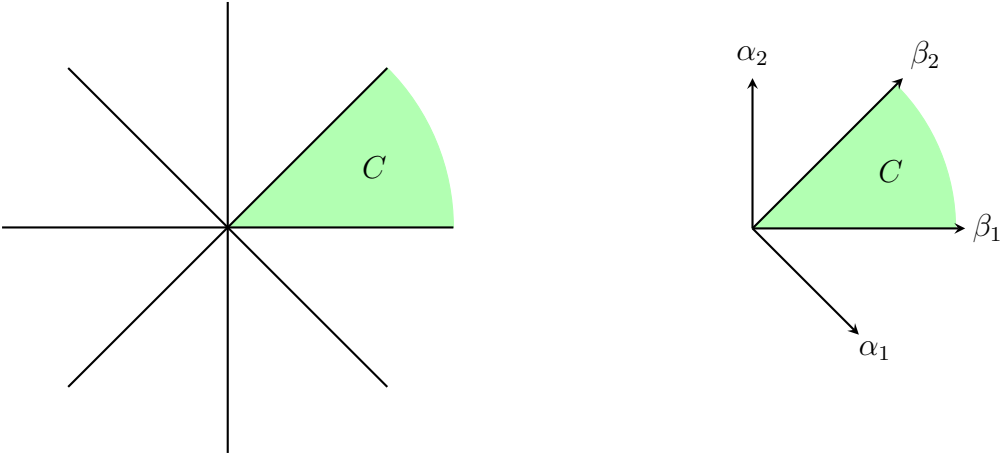
\begin{figure}
\begin{center}
\begin{tikzpicture}[scale=3]
\draw[green!30,fill=green!30] (0,0) --  (45:1) arc(45:0:1) -- cycle;
\draw[thick] (-1,0) -- (1,0);
\draw[thick] (-1/1.414,-1/1.414) -- (1/1.414,1/1.414);
\draw[thick] (0,-1) -- (0,1);
\draw[thick] (1/1.414,-1/1.414) -- (-1/1.414,1/1.414);
\draw (0.65,0.65*0.414) node {$C$};
\end{tikzpicture}
\qquad
\begin{tikzpicture}[scale=2]
\draw[draw=none] (-1.5,0) -- (1.5,0);
\draw[draw=none] (0,-1.5) -- (0,1.5);
\draw[green!30,fill=green!30] (0,0) --  (45:1.35) arc(45:0:1.35) -- cycle;
\draw (0.65*1.414,0.65*0.414*1.414) node {$C$};
\draw [thick, -stealth](0,0) -- (0,1);
\draw (0,1.15) node {$\alpha_2$};
\draw [thick, -stealth](0,0) -- (1/1.414,-1/1.414);
\draw (1.15/1.414,-1.15/1.414) node {$\alpha_1$};
\draw [thick, -stealth](0,0) -- (1,1);
\draw (1.15,1.15) node {$\beta_2$};
\draw [thick, -stealth](0,0) -- (1.414,0);
\draw (1.414+0.15,0) node {$\beta_1$};
\end{tikzpicture}
\end{center}
\caption{\label{fig.weyl_chamber}
\textbf{(left)}
Weyl chamber arising from dihedral group.
\textbf{(right)}
Fundamental system and its dual basis.
}
\end{figure}

A reflection group in $\operatorname{O}(d)$ is said to be \textbf{reducible} if it can be decomposed as a direct product of nontrivial reflection groups in $\operatorname{O}(d)$, in which case $\mathbb{R}^d$ can be correspondingly decomposed into orthogonal invariant subspaces.
For example, the group of orthogonal diagonal matrices is reducible and can be decomposed along the lines spanned by the standard basis.
We note that this use of ``reducible'' differs slightly from its usage in representation theory.
While a reflection group may be viewed as the image of a representation, the representation is irreducible precisely when its image is both irreducible \textit{and} essential as a reflection group.
For example, the group of permutation matrices is reducible as a representation but irreducible as a reflection group  (since it is not essential).

For each $d\in\mathbb{N}$, there are finitely many finite essential irreducible reflection groups in $\operatorname{O}(d)$ up to conjugation by an orthogonal matrix.
These were famously classified by Coxeter~\cite{Coxeter:35} in terms of abstract group presentations.
To motivate these presentations, note that for linearly independent unit vectors $u,v\in\mathbb{R}^d$, the composition $R(u)R(v)$ has a $1$-eigenspace of dimension $d-2$, and it acts as a rotation by $2\arccos\langle u,v\rangle$ radians in the $2$-dimensional orthogonal complement.
If $R(u)$ and $R(v)$ belong to a common finite reflection group, it must hold that $R(u)R(v)$ has finite order, and so $\arccos\langle u,v\rangle$ is a rational multiple of $\pi$.
If $u$ and $v$ belong to a fundamental system of the reflection group, it turns out that $\langle u,v\rangle=-\cos\frac{\pi}{m}$, where $m$ is the order of $R(u)R(v)$.
As an abstraction of this, a \textbf{Coxeter system} consists of generators $\{r_i\}_{i=1}^d$ and orders $\{m_{ij}\}_{i,j=1}^d$ such that $m_{ii}=1$ and $m_{ij}\geq 2$ for $i\neq j$.
The resulting \textbf{Coxeter group} is then given by
\[
\langle r_1,\ldots,r_d:(r_ir_j)^{m_{ij}}=1\rangle.
\]
Every Coxeter system has a \textbf{Coxeter diagram}, namely, a graph with vertex set $\{1,\ldots,d\}$ and adjacency $i\leftrightarrow j$ precisely when $m_{ij}\geq3$, and this edge is labeled by $m_{ij}$ whenever $m_{ij}\geq4$.
The Coxeter system is \textbf{irreducible} when the Coxeter diagram is connected.
For example, the connected Coxeter diagram on two vertices whose edge is labeled by $4$ corresponds to the Coxeter group
\[
\langle r_1,r_2:r_1^2=r_2^2=(r_1r_2)^{4}=1\rangle,
\]
which is isomorphic to the dihedral group of order $8$; see Figure~\ref{fig.weyl_chamber} for a relevant illustration.

Given a finite irreducible Coxeter system, one may use the geometry of fundamental systems to reconstruct the corresponding finite essential irreducible reflection group (which again is unique up to conjugation).
Indeed, the Gram matrix of the fundamental system $\{\alpha_i\}_{i=1}^d$ is given by
\[
\langle \alpha_i,\alpha_j\rangle
=\left\{\begin{array}{cl}1&\text{if }i=j\\
-\cos\frac{\pi}{m_{ij}}&\text{if }i\neq j,
\end{array}\right.
\]
which can then be Cholesky factored to determine $\{\alpha_i\}_{i=1}^d$ up to orthogonal transformation.
Then $\{R(\alpha_i)\}_{i=1}^d$ generates the desired reflection group.
In the above case where $d=2$ and $m_{12}=4$, we obtain the Gram matrix
\[
\left[\begin{array}{cc}
\phantom{-}1&-\frac{1}{\sqrt{2}}\\
-\frac{1}{\sqrt{2}}&\phantom{-}1
\end{array}\right],
\]
which can be factored to recover the fundamental system illustrated in Figure~\ref{fig.weyl_chamber}.

In the case where $G$ is essential and irreducible, the closed fundamental chamber is an \textbf{acute cone}, i.e., a convex cone $K$ such that every nonzero $x,y\in K$ satisfies $\langle x,y\rangle>0$.
This geometric fact will prove useful in understanding the spectral properties associated with fundamental systems of $G$.

\begin{lemma}
\label{lem.acute cone}
For every essential and irreducible finite reflection group $G$, each closed chamber of $G$ is an acute cone.
\end{lemma}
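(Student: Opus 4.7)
The plan is to reduce the statement to a positivity claim about the Gram matrix of the dual basis $\{\beta_i\}_{i=1}^d$ of the fundamental system, and then to derive that positivity from classical facts about inverses of Stieltjes matrices.

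First I would fix a closed chamber $\overline{C}$ and use the fact recorded in the preliminaries that, when $G$ is essential, $\overline{C}$ is the simplicial cone $\{\sum_i a_i\beta_i:a_i\geq 0\}$. For nonzero $x=\sum_i a_i\beta_i$ and $y=\sum_j b_j\beta_j$ in $\overline{C}$, bilinearity gives
\[
\langle x,y\rangle=\sum_{i,j}a_ib_j\langle\beta_i,\beta_j\rangle.
\]
Since the $a_i,b_j$ are nonnegative and not all zero, this will be strictly positive as soon as every $\langle\beta_i,\beta_j\rangle$ is strictly positive. Thus the whole task reduces to showing that the Gram matrix $M$ of $\{\beta_i\}_{i=1}^d$ is entrywise positive.

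Next I would exploit the dual basis relation $\langle\alpha_i,\beta_j\rangle=\delta_{ij}$ to observe that $M$ equals the inverse of the Gram matrix $N$ of the fundamental system. From the Coxeter-theoretic formula $\langle\alpha_i,\alpha_j\rangle=-\cos(\pi/m_{ij})$ we read off that $N$ has diagonal entries equal to $1$, off-diagonal entries in $(-1,0]$, and is positive definite because the $\alpha_i$ are linearly independent (here I use that $G$ is essential, so $\ell=d$). Hence $N$ is a symmetric Stieltjes matrix. The irreducibility of $G$ means the Coxeter diagram is connected; since an edge $i\leftrightarrow j$ occurs precisely when $N_{ij}<0$, the support graph of the off-diagonal of $N$ is connected, which is exactly the matrix-theoretic condition that $N$ be irreducible.

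To conclude $N^{-1}>0$ entrywise, I would invoke (or, if desired, sketch) the standard Perron--Frobenius argument: write $N=I-A$ with $A\geq 0$ and zero diagonal; because $N\succ 0$, every eigenvalue of $A$ is strictly less than $1$, so the Neumann series $N^{-1}=\sum_{k\geq 0}A^k$ converges; each summand is entrywise nonnegative, and the connectedness of the support graph of $A$ guarantees that for every pair $(i,j)$ some power $A^k$ has a strictly positive $(i,j)$-entry. Therefore $M=N^{-1}$ has all entries strictly positive, and the acute-cone property follows from the reduction in the first paragraph. The only subtle point I anticipate is cleanly identifying the two notions of irreducibility (connected Coxeter diagram vs.\ irreducible nonnegative matrix), but this is essentially a one-line graph-theoretic translation.
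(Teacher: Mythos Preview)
Your proposal is correct and follows essentially the same approach as the paper: both reduce to showing that the Gram matrix of the dual basis $\{\beta_i\}$, namely $(AA^\top)^{-1}$, is entrywise positive, and both obtain this from the fact that $AA^\top$ is an irreducible positive definite matrix with nonpositive off-diagonal entries. The paper simply cites this M-matrix fact from Varga, whereas you spell out the Neumann-series argument and the reduction from acuteness to entrywise positivity of $B^\top B$; nothing further is needed.
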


\begin{proof}
Since the rows of $A$ form a fundamental system, the off-diagonal entries of the Gram matrix $AA^\top$ are nonpositive; see Lemma~B on page~39 in~\cite{Kane:01}.
Furthermore, since the reflection group is irreducible, the Gram matrix $AA^\top$ is irreducible as a matrix (i.e., it is not permutation equivalent to a block-diagonal matrix).
It follows from Corollary~3.24 in~\cite{Varga:62} that $B^\top B=(AA^\top)^{-1}$ is entrywise positive, which implies the result.
\end{proof}

All of our main results rely on the following spectral phenomena:

\begin{lemma}
\label{lem.spectral phenomenon}
Suppose $G\leq\operatorname{O}(d)$ is an essential and irreducible finite reflection group, and select any matrix $A\in\mathbb{R}^{d\times d}$ with unit-norm rows that form a fundamental system for $G$.
\begin{itemize}
\item[(a)]
The top and bottom eigenvectors of $AA^\top$ satisfy $(v_1)_i^2=(v_d)_i^2$ for all $i\in[d]$.
\end{itemize}
Next, consider the matrix $M:=AA^\top-I_d$.
For each $j\in[d]$, let $M^{(j)}$ denote the principal submatrix of $M$ obtained by discarding the $j$th row and column, and let $\lambda_k(\cdot)$ denote the $k$th largest eigenvalue of the argument, counted with multiplicity.
\begin{itemize}
\item[(b)]
For each $k\in[d]$, it holds that $\lambda_{d-k+1}(M)=-\lambda_k(M)$.
\item[(c)]
For each $j\in[d]$ and $k\in[d-1]$, it holds that $\lambda_{d-k}(M^{(j)})=-\lambda_k(M^{(j)})$.
\end{itemize}
\end{lemma}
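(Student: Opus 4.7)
The plan is to leverage two structural features of $G$: its Coxeter diagram is a tree (by Coxeter's classification), and $(AA^\top)^{-1}$ is entrywise positive (by Lemma~\ref{lem.acute cone}). The tree property will deliver the spectral symmetries (b) and (c), and Perron--Frobenius positivity will deliver (a).

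For parts (b) and (c), I would first observe that the off-diagonal entry $M_{ij}=-\cos(\pi/m_{ij})$ vanishes precisely when $i$ and $j$ are non-adjacent in the Coxeter diagram, so the sparsity pattern of $M$ coincides with the edge set of the diagram. Since every finite irreducible Coxeter diagram is a tree (this is immediate from the Coxeter classification recorded in~\cite{Kane:01}), it is bipartite; fix a bipartition $[d]=V_1\sqcup V_2$ and let $D$ be the diagonal matrix with $D_{ii}=+1$ for $i\in V_1$ and $D_{ii}=-1$ for $i\in V_2$. A direct check gives $DMD=-M$: every potentially nonzero off-diagonal entry $M_{ij}$ has $D_{ii}D_{jj}=-1$ because its indices lie in different parts of the bipartition, and the diagonal of $M$ is zero. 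Since $D^{-1}=D$, this realizes $M$ as similar to $-M$, so the spectrum of $M$ is symmetric about $0$, which is precisely (b). For (c), deleting vertex $j$ from the Coxeter diagram leaves a forest, which is still bipartite, so the analogous diagonal sign-flip conjugation establishes $M^{(j)}\sim -M^{(j)}$ and hence the claimed eigenvalue pairing.

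For part (a), note that eigenvectors of $AA^\top$ coincide with those of $M$ (with eigenvalues shifted by $1$). Lemma~\ref{lem.acute cone} furnishes that $(AA^\top)^{-1}$ is entrywise positive, so the Perron--Frobenius theorem yields that its Perron root is simple with a strictly positive eigenvector; passing to $AA^\top$ (whose eigenvectors and reciprocal eigenvalues are shared), the smallest eigenvalue is simple and admits a strictly positive eigenvector $v_d$. Using the intertwining $DMD=-M$ from the previous step, one computes $M(Dv_d)=-\lambda_d(M)\cdot Dv_d=\lambda_1(M)\cdot Dv_d$, so $Dv_d$ spans a top eigenspace of $M$. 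Simplicity at the top now follows from simplicity at the bottom together with the similarity $M\sim -M$, forcing $v_1=\pm Dv_d$; since $D$ is $\pm 1$-diagonal, this gives $(v_1)_i^2=(v_d)_i^2$ coordinatewise.

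The one substantive ingredient is the tree shape of every finite irreducible Coxeter diagram, which I expect to be the main conceptual input and which I would cite from the classification rather than reprove; everything else is bookkeeping with diagonal sign-flips together with the Perron--Frobenius conclusion already packaged in Lemma~\ref{lem.acute cone}.
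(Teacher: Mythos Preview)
Your argument is correct and is genuinely different from the paper's proof. The paper establishes (b) and (c) by case analysis over Coxeter's classification: the exceptional types are verified in Mathematica, $I_m$ is done by hand, and for each of $A_\ell$, $B_\ell$, $D_\ell$ it derives a three-term recurrence showing the characteristic polynomials of $2M$ and $2M^{(j)}$ are even or odd. Part~(a) is then deduced from (b) and (c) via the eigenvector--eigenvalue identity of~\cite{DentonPTZ:21}. Your approach replaces all of this with a single uniform observation: every finite irreducible Coxeter diagram is a tree, hence bipartite, so a diagonal $\pm1$ sign matrix $D$ anticommutes with $M$ (and its principal submatrices), giving (b) and (c) at once; and then $v_1=\pm Dv_d$ gives (a) directly. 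This is considerably cleaner---it eliminates the computer verification and the recurrence bookkeeping, and it bypasses the eigenvector--eigenvalue identity entirely. The only classification input you need is the acyclicity of the diagrams, which is much lighter than the type-by-type work the paper does. One small remark: you invoke Lemma~\ref{lem.acute cone} for the entrywise positivity of $(AA^\top)^{-1}$, which is in the \emph{proof} of that lemma rather than its statement; alternatively you could apply Perron--Frobenius directly to the nonnegative irreducible matrix $-M$ to obtain a strictly positive $v_d$, avoiding that detour.
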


We could not locate a proof of the above facts, and so we provide our own in Appendix~\ref{appendix.spectral phenomenon}.
Our proof argues by cases over Coxeter's classification.

\section{Main results and key reductions}
\label{sec.main results}

Our first main result describes minimal optimal max filter banks for every reflection group in terms of its decomposition into irreducible constituent representations:

\begin{theorem}
\label{thm.main}
Suppose $G\leq\operatorname{O}(d)$ is a finite reflection group.
\begin{itemize}
\item[(a)]
If $G$ is the trivial group, then $\kappa(G)=1$, and minimal  optimal templates are given by any orthonormal basis.
\item[(b)]
If $G$ is essential and irreducible, then for any matrix $A\in\mathbb{R}^{d\times d}$ with unit-norm rows that form a fundamental system for $G$, it holds that $\kappa(G)=\frac{\sigma_{\max}(A)}{\sigma_{\min}(A)}$, and minimal optimal templates are given by the columns of $A^{-1}$.
\item[(c)]
Otherwise, $G$ can be decomposed into irreducible constituents $G_i\leq\operatorname{O}(d_i)$, one of which is trivial precisely when $G$ is not essential, and the others are essential and irreducible as reflection groups.
Then $\kappa(G)=\max_i\kappa(G_i)$, and minimal optimal templates are given by an orthogonal direct sum of the minimal optimal templates for each irreducible constituent described in (a) and (b).
\end{itemize}
\end{theorem}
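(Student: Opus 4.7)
The plan is to handle (a) and (c) as bookkeeping and concentrate on (b). For (a), choosing $\{t_i\}_{i=1}^d$ to be an orthonormal basis makes $\Phi$ an isometry from $\mathbb{R}^d$ to $\mathbb{R}^d$, so $\kappa(G)=1$, which is also a universal lower bound. For (c), the block decomposition $\mathbb{R}^d=\bigoplus_i V_i$ with $G=\prod_i G_i$ acting block-diagonally makes $\max_{g\in G}\langle t,gx\rangle=\sum_i \max_{g_i}\langle t|_{V_i},g_i x|_{V_i}\rangle$ whenever $t,x$ split blockwise.  Using templates supported on a single block each and rescaling within each block so that all blocks share a common upper Lipschitz constant then produces a bank with $C/c=\max_i\kappa(G_i)$; the matching lower bound follows by restricting any putative optimal bank to orbits $Gx$ with $x$ supported in a single $V_j$, which is itself a valid max filter bank for $G_j$.

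For the upper bound in (b), I would invoke two standard facts about finite reflection groups: the quotient metric on $\mathbb{R}^d/G$ agrees with $\|\hat x-\hat y\|$ on canonical representatives in $\overline{C}$, and $\max_{g\in G}\langle u,gv\rangle=\langle u,v\rangle$ whenever $u,v\in\overline{C}$.  Because the columns $\beta_1,\ldots,\beta_d$ of $B:=A^{-1}$ form the dual basis of the fundamental system, they lie in $\overline{C}$, so the proposed bank collapses to the linear map $\hat x\mapsto B^\top\hat x$.  Full-dimensionality of $\overline{C}$ allows arbitrary directions $\hat x-\hat y$ to be realized, so the Lipschitz bounds equal the extreme singular values of $B^\top$, yielding $\kappa=\sigma_{\max}(A)/\sigma_{\min}(A)$.

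The lower bound in (b) is the substantive step.  Given any templates $\{t_i\}_{i=1}^n$, form $T\in\mathbb{R}^{d\times n}$ from canonical representatives of the $t_i$ in $\overline{C}$.  A local analysis at a generic interior point of $C$ shows that $\Phi$ is locally the linear map $\hat x\mapsto T^\top\hat x$, so $C/c\geq\sigma_{\max}(T)/\sigma_{\min}(T)$.  Writing $T=BX$ with $X$ entrywise nonnegative (since $\overline{C}$ is the simplicial cone spanned by the columns of $B$) and setting $Y:=XX^\top$, the claim reduces to showing that
\[
G\preceq Y\preceq tG\ \text{and}\ Y_{ij}\geq 0\ \text{for all}\ i,j\ \text{imply}\ t\geq\kappa(G),\qquad G:=AA^\top.
\]
I would prove this via the chain
\[
\lambda_{\max}(G)\leq v_1^\top Y v_1\leq |v_1|^\top Y |v_1|=v_d^\top Y v_d\leq t\,\lambda_{\min}(G),
\]
where $v_1,v_d$ are unit top and bottom eigenvectors of $G$: the outer inequalities are the L\"owner sandwich on $Y$; the second uses $Y_{ij}\geq 0$ together with $|v_1|_i|v_1|_j\geq (v_1)_i(v_1)_j$; and the middle equality combines Lemma~\ref{lem.spectral phenomenon}(a) (which says $|v_1|$ and $|v_d|$ agree coordinatewise) with $v_d>0$, a Perron--Frobenius consequence of Lemma~\ref{lem.acute cone} applied to a nonnegative irreducible shift of $-G$.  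The main obstacle is precisely this middle equality: without Lemma~\ref{lem.spectral phenomenon}(a), the entrywise-nonnegativity and L\"owner-sandwich constraints do not combine to produce the bound, and indeed this is the point of the paper's appendix-long case check over the Coxeter classification.
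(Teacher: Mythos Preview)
Your argument is correct, and for part~(b) it is genuinely different from---and considerably shorter than---the paper's. Both proofs reduce to bounding the condition number of an arbitrary matrix $T$ with columns in $\overline{C}$, and both ultimately hinge on Lemma~\ref{lem.spectral phenomenon}(a). But where the paper spends all of Section~\ref{sec.proof of thm.main} on a semidefinite/copositive duality argument (first showing that the optimal $M\in\operatorname{CP}(d)$ in $\|BMB^\top-I\|_{2\to2}$ can be taken diagonal via a dual feasibility argument, then that it can be taken scalar), you bypass this entirely: writing $T=BX$ with $X\geq 0$ and $Y=XX^\top$, the congruence $A(\cdot)A^\top$ turns $\lambda_{\min}(TT^\top)I\preceq TT^\top\preceq\lambda_{\max}(TT^\top)I$ into a sandwich $sG\preceq Y\preceq stG$ with $G=AA^\top$ and $t=\kappa(T)^2$, and then the single chain
\[
s\,\lambda_{\max}(G)=s\,v_1^\top Gv_1\leq v_1^\top Yv_1\leq |v_1|^\top Y|v_1|=v_d^\top Yv_d\leq st\,v_d^\top Gv_d=st\,\lambda_{\min}(G)
\]
finishes. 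The middle equality is exactly Lemma~\ref{lem.spectral phenomenon}(a) combined with the Perron--Frobenius positivity of $v_d$ (via Lemma~\ref{lem.acute cone}), just as you say. What the paper's route buys is a connection to the scalable-frames literature and a structural statement about optimizers of the completely positive program; what your route buys is a two-line proof once Lemma~\ref{lem.spectral phenomenon}(a) is in hand. Two minor notes: you overload $G$ for both the group and the Gram matrix $AA^\top$, which is momentarily confusing; and your ``local analysis at a generic interior point'' is really the global statement of Lemma~\ref{lem.linearize}, so you may as well cite it. Your treatment of~(c) is also correct and close in spirit to the paper's, though the paper routes it through the cone condition number (Lemma~\ref{lem.direct sum condition number}) rather than arguing directly with restricted banks.
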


Theorem~\ref{thm.main}(a) is immediate.
The proof of part~(c) is straightforward after some preliminary analysis of Weyl chambers, and we provide it later in this section.
The proof of part~(b) occupies Section~\ref{sec.proof of thm.main}.
Our second main result (Theorem~\ref{thm.asymptotic}) leverages Theorem~\ref{thm.main}(b) to compute $\kappa(G)$ for the standard representation $G$ of any irreducible Coxeter group, which in turn determines $\kappa(G)$ for any finite reflection group $G$ by Theorem~\ref{thm.main}(c).
The proof of Theorem~\ref{thm.asymptotic} appears in Section~\ref{sec.proof of thm.asymptotic}.

\begin{theorem}
\label{thm.asymptotic}
The optimal max filtering condition numbers of the exceptional Coxeter groups are reported in Table~\ref{table.condition numbers}.
For the remaining Coxeter groups, we have
\begin{itemize}
\item[(a)]
$\kappa(A_\ell)=(\frac{2}{\pi}+o(1))\ell$,
\item[(b)]
$\kappa(B_\ell)=(\frac{4}{\pi}+o(1))\ell$,
\item[(c)]
$\kappa(D_\ell)=(\frac{4}{\pi}+o(1))\ell$, and
\item[(d)]
$\kappa(I_m)=\sqrt{\frac{1+\cos(\frac{\pi}{m})}{1-\cos(\frac{\pi}{m})}}=(\frac{2}{\pi}+o(1))m$.
\end{itemize}
\end{theorem}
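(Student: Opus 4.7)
The plan is to apply Theorem~\ref{thm.main}(b), which identifies $\kappa(G)$ with $\sigma_{\max}(A)/\sigma_{\min}(A)=\sqrt{\lambda_{\max}(AA^\top)/\lambda_{\min}(AA^\top)}$ for any $A\in\mathbb{R}^{d\times d}$ whose rows form a fundamental system. Since $AA^\top$ is determined entrywise by the Coxeter diagram via $(AA^\top)_{ij}=-\cos(\pi/m_{ij})$ for $i\neq j$ (and $1$ on the diagonal), the theorem reduces to locating the extremal eigenvalues of a specific $d\times d$ symmetric matrix attached to each irreducible Coxeter system in the classification. For the exceptional types $H_3$, $H_4$, $F_4$, $E_6$, $E_7$, $E_8$, the Gram matrix has fixed modest size and its characteristic polynomial can be handled directly; this populates Table~\ref{table.condition numbers}.

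For the infinite families I would produce closed forms and then Taylor expand. The dihedral case $I_2(m)$ is a $2\times 2$ eigenvalue problem with spectrum $1\pm\cos(\pi/m)$, giving the stated formula and, by a half-angle identity, $\kappa(I_2(m))=\cot(\pi/(2m))$. For $A_\ell$, the Gram matrix is $I-\tfrac12 T_\ell$, where $T_\ell$ is the adjacency matrix of the path on $\ell$ vertices, so its spectrum is the classical $\{1-\cos(k\pi/(\ell+1))\}_{k=1}^{\ell}$, and pairing the extremes yields $\kappa(A_\ell)=\cot(\pi/(2(\ell+1)))$. In both cases, Taylor-expanding $\cot\theta=1/\theta+O(\theta)$ near $0$ recovers the leading constant $2/\pi$ in parts (a) and (d).

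The main obstacle is the $B_\ell$ and $D_\ell$ cases, whose Coxeter diagrams depart from a pure path. For $B_\ell$ the Gram matrix is tridiagonal but has a single terminal off-diagonal entry of $-1/\sqrt{2}$ rather than $-1/2$; I would set up a three-term recurrence for the leading principal minors of $xI-AA^\top$ along the path and identify the resulting polynomial sequence with a boundary-modified Chebyshev polynomial, whose roots turn out to be $\{2\sin^2(\pi(2k-1)/(4\ell))\}_{k=1}^{\ell}$, giving $\kappa(B_\ell)=\cot(\pi/(4\ell))$. For $D_\ell$, whose diagram branches at one end, I would exploit the $\mathbb{Z}/2\mathbb{Z}$ symmetry swapping the two fork vertices to decompose the eigenvalue problem into symmetric and antisymmetric sectors: the antisymmetric sector forces the contribution of the two fork coordinates to cancel at their common neighbor and reduces to a smaller $A$-type tridiagonal problem, while the symmetric sector, after fusing the fork into a single coordinate with rescaled weight, reduces to a $B$-type problem; concatenating the two spectra gives $\kappa(D_\ell)=\cot(\pi/(4(\ell-1)))$.

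A final application of $\cot\theta=1/\theta+O(\theta)$ as $\theta\to 0$ converts each closed form into the stated $(c/\pi+o(1))\ell$ asymptotic, producing the $4/\pi$ constants in parts (b) and (c) and completing the theorem modulo the four eigenvalue computations above. The hardest step is the $D_\ell$ reduction, since the branch in the diagram rules out a direct recurrence along a path; the symmetry argument is what rescues it and also explains, a posteriori, why the $D_\ell$ condition number is asymptotically double that of $A_\ell$.
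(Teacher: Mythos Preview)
Your proposal is correct and takes a genuinely different route from the paper. The paper does \emph{not} compute the spectra of $AA^\top$ in closed form for $A_\ell$, $B_\ell$, $D_\ell$; instead it embeds $\tfrac{1}{d}A^{-1}$ (or a close relative) as a piecewise-constant kernel on $[0,1]^2$, shows convergence in $L^2$ to an explicit integral kernel, and reads off the asymptotic operator norm from a Volterra-type calculation. That machinery yields only the leading constants $2/\pi$ and $4/\pi$, not exact values. Your approach---exact diagonalization of the path-type tridiagonal matrices via Chebyshev recurrences, plus a $\mathbb{Z}/2$ symmetry reduction for $D_\ell$---is more elementary and strictly stronger: it produces the closed forms $\kappa(A_\ell)=\cot\tfrac{\pi}{2(\ell+1)}$, $\kappa(B_\ell)=\cot\tfrac{\pi}{4\ell}$, $\kappa(D_\ell)=\cot\tfrac{\pi}{4(\ell-1)}$, from which the asymptotics drop out. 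The paper's method, on the other hand, is more robust to perturbations of the matrix entries and does not rely on the tridiagonal structure being exactly solvable.

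One small correction to your $D_\ell$ sketch: the antisymmetric sector under the fork swap is only one-dimensional (the vector supported on the two fork vertices with opposite signs), and it contributes the single eigenvalue $\lambda=1$, not a full ``smaller $A$-type'' problem. All the work is done by the symmetric sector, which---exactly as you say---fuses the fork into a single coordinate with edge weight $1/\sqrt{2}$ and becomes the $B_{\ell-1}$ Gram matrix. Since the extremal eigenvalues live in the symmetric sector, your formula $\kappa(D_\ell)=\kappa(B_{\ell-1})=\cot\tfrac{\pi}{4(\ell-1)}$ is unaffected.
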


\begin{table}
\caption{\label{table.condition numbers}
Optimal max filtering condition numbers of exceptional Coxeter groups
}
\begin{center}
\begingroup
\setlength{\tabcolsep}{12pt}
\begin{tabular}{|crl|}\hline
$G$ & $\kappa(G)$ & minimal polynomial \\ \hline\hline
$E_6$ & $7.5957$ & $x^4 - 8 x^3 + 2 x^2 + 8 x + 1$\\ \hline
$E_7$ & $11.4301$ & $x^6 - 12x^5 + 3x^4+40x^3+3x^2-12x+1$\\ \hline
$E_8$ & $19.0811$ & $x^8-16x^7-60x^6+16x^5+134x^4+16x^3-60x^2-16x+1$ \\ \hline
$F_4$ & $7.5957$ & $x^4 - 8 x^3 + 2 x^2 + 8 x + 1$\\ \hline
$H_3$ & $6.3137$ & $x^4-4x^3-14x^2-4x+1$\\ \hline
$H_4$ & $19.0811$ & $x^8-16x^7-60x^6+16x^5+134x^4+16x^3-60x^2-16x+1$ \\ \hline
\end{tabular}
\endgroup
\end{center}
\end{table}

Before proving Theorem~\ref{thm.main}(c), we first motivate our approach with an example.
When $G$ is the group of $d\times d$ permutation matrices, it is straightforward to verify that
\begin{equation}
\label{eq.sort}
\llangle G\cdot x,G\cdot y\rrangle
=\langle \operatorname{sort}(x),\operatorname{sort}(y)\rangle.
\end{equation}
It turns out that every reflection group has a map analogous to sorting.
As motivation, we observe that the image of the sorting map is the polyhedral cone
\[
\{(x_1,\ldots,x_d)\in\mathbb{R}^d:x_1\geq\cdots\geq x_d\}.
\]
Every orbit $G\cdot x$ intersects this cone at a single point, namely, at $\operatorname{sort}(x)$.
One may show that the above cone is the closure of a Weyl chamber of $G$, which offers a hint for how to generalize the sorting map.

Given a finite reflection group $G\leq\operatorname{O}(d)$ and a corresponding Weyl chamber $C\subseteq\mathbb{R}^d$, we define $\operatorname{rep}\colon\mathbb{R}^d/G\to\overline{C}$ so that $\operatorname{rep}(G\cdot x)$ is the unique member of $(G\cdot x)\cap\overline{C}$; indeed, uniqueness is established on page~62 in~\cite{Kane:01}, which also demonstrates that $\operatorname{rep}$ is bijective.
In addition, $\operatorname{rep}$ preserves distances:

\begin{lemma}
\label{lem.isometry}
The bijection $\operatorname{rep}\colon \mathbb{R}^d/G\to\overline{C}$ defined above is an isometry.
\end{lemma}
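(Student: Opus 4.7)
The plan is to reduce the isometry claim to an inner product inequality about points of $\overline{C}$, then prove that inequality by induction on Coxeter length. The quotient metric is $d(G\cdot x, G\cdot y) = \inf_{g\in G}\|x - gy\|$, and since $\operatorname{rep}(G\cdot x)$ and $\operatorname{rep}(G\cdot y)$ are particular orbit representatives, the bound $\|\operatorname{rep}(G\cdot x) - \operatorname{rep}(G\cdot y)\| \geq d(G\cdot x, G\cdot y)$ is immediate. What remains is the reverse inequality: writing $p := \operatorname{rep}(G\cdot x)$ and $q := \operatorname{rep}(G\cdot y)$, both in $\overline{C}$, I must show $\|p - q\| \leq \|p - gq\|$ for every $g\in G$. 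Because $g$ is orthogonal, this is equivalent to $\langle p, q\rangle \geq \langle p, gq\rangle$ for all $g\in G$.

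To prove this inequality, I would induct on the length $\ell(g)$ of $g$ as a reduced word in the simple reflections $s_i := R(\alpha_i)$ associated with $\overline{C}$. The base case $\ell(g) = 0$ gives $g=e$ and equality. For the inductive step, I invoke the standard Coxeter-theoretic fact (see, e.g., Chapter 5 of \cite{Kane:01}) that whenever $\ell(g)\geq 1$, there is a simple reflection $s_i$ such that $\ell(gs_i)=\ell(g)-1$ and $g\alpha_i$ is a negative root, meaning $-g\alpha_i$ lies in the dual cone of $C$. Setting $g' := gs_i$, the identity $s_iq = q - 2\langle q,\alpha_i\rangle\alpha_i$ together with $g'\alpha_i = g(s_i\alpha_i) = -g\alpha_i$ yields
\[
\langle p, gq\rangle = \langle p, g'q\rangle - 2\langle q,\alpha_i\rangle\langle p, g'\alpha_i\rangle.
\]
Both correction factors are nonnegative: $\langle q,\alpha_i\rangle\geq 0$ because $q\in\overline{C}$ and $\alpha_i$ lies in the dual cone of $C$; and $\langle p, g'\alpha_i\rangle = \langle p,-g\alpha_i\rangle\geq 0$ because $-g\alpha_i$ is a positive root and $p\in\overline{C}$. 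Hence $\langle p, gq\rangle \leq \langle p, g'q\rangle$, and the inductive hypothesis delivers $\langle p, g'q\rangle\leq\langle p, q\rangle$ to close the step.

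The main obstacle, such as there is one, is the length-reduction input from Coxeter theory in the inductive step; I would cite this rather than reprove it. All other steps are elementary inner product expansions combined with sign tracking from the defining property that positive roots pair nonnegatively with $\overline{C}$. The argument applies uniformly to all finite reflection groups (not just essential or irreducible ones), since the simple reflections always generate $G$ per the discussion in Section~2.
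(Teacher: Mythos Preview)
Your argument is correct, but it follows a different path from the paper's. The paper first restricts to the \emph{open} chamber: for $x,y\in C$ and any $y''\in G\cdot y$ with $y''\neq y$, it finds a wall $H_{\alpha}$ of $C$ strictly separating $y''$ from $C$ (possible since $y''\notin\overline{C}$ forces $\langle y'',\alpha_i\rangle<0$ for some $i$), and observes that reflecting across that wall strictly decreases $\|x-y''\|$; by finiteness of the orbit the minimum must then occur at $y$. It then passes to $\overline{C}$ by a continuity argument, approximating boundary points by sequences in $C$. Your proof instead works directly on $\overline{C}$ via induction on Coxeter length, invoking the standard equivalence $\ell(gs_i)<\ell(g)\Leftrightarrow g\alpha_i<0$ to peel off one simple reflection at a time. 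What your approach buys is that it handles the closed chamber in one shot, with no limiting step; what it costs is the citation to the length--root correspondence, whereas the paper's geometric wall-crossing is essentially self-contained. The two arguments are cousins---both ultimately reduce to the sign of $\langle p,\alpha_i\rangle$ for $p\in\overline{C}$---but yours is the cleaner algebraic packaging and the paper's is the more hands-on geometric one.
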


This implies the following generalization of \eqref{eq.sort}:
\begin{align}
\llangle G\cdot x,G\cdot y\rrangle
\nonumber
&=\frac{1}{2}\Big(\|x\|^2+\|y\|^2-d(G\cdot x,G\cdot y)^2\Big)\\
\nonumber
&=\frac{1}{2}\Big(\|\operatorname{rep}(G\cdot x)\|^2+\|\operatorname{rep}(G\cdot y)\|^2-\|\operatorname{rep}(G\cdot x)-\operatorname{rep}(G\cdot y)\|^2\Big)\\
\label{eq.representative representation}
&=\langle \operatorname{rep}(G\cdot x),\operatorname{rep}(G\cdot y)\rangle.
\end{align}

\begin{proof}[Proof of Lemma~\ref{lem.isometry}]
A change of variables gives
\[
d(G\cdot x,G\cdot y)
=\min_{g,h\in G}\|gx-hy\|
=\min_{y'\in G\cdot y}\|\operatorname{rep}(G\cdot x)-y'\|.
\]
We note for future use that $(x,y)\mapsto d(G\cdot x,G\cdot y)$ is continuous since the first equality above expresses it as a minimum of finitely many continuous functions.

Suppose $x,y\in C$.
For each $y''\in G\cdot y$ with $y''\neq y$, there is a hyperplane orthogonal to some $\alpha$ in the fundamental system of $C$ that strictly separates $y''$ from $y\in C$, and so
\[
\|x-y''\|
>\|x-R(\alpha)y''\|
\geq \min_{y'\in G\cdot y}\|x-y'\|.
\]
It follows that $\|x-y'\|$ is minimized over $y'\in G\cdot y$ at $y'=y$, i.e.,
\[
d(G\cdot x,G\cdot y)
=\|x-y\|
=\|\operatorname{rep}(G\cdot x)-\operatorname{rep}(G\cdot y)\|.
\]

Now consider any $x,y\in\mathbb{R}^d$, and take sequences $\{x_n\}_{n=1}^\infty$ and $\{y_n\}_{n=1}^\infty$ in $C$ such that $x_n\to \operatorname{rep}(G\cdot x)$ and $y_n\to \operatorname{rep}(G\cdot y)$.
Then continuity implies
\[
d(G\cdot x_n,G\cdot y_n)
\to d\Big(G\cdot \operatorname{rep}(G\cdot x),G\cdot \operatorname{rep}(G\cdot y)\Big)
= d(G\cdot x,G\cdot y).
\]
Meanwhile, the previous argument gives
\[
d(G\cdot x_n,G\cdot y_n)
=\|x_n-y_n\|
\to\|\operatorname{rep}(G\cdot x)-\operatorname{rep}(G\cdot y)\|.
\]
It follows that $d(G\cdot x,G\cdot y)=\|\operatorname{rep}(G\cdot x)-\operatorname{rep}(G\cdot y)\|$, as desired.
\end{proof}

By virtue of \eqref{eq.representative representation}, we may represent max filtering as a linear map precomposed with the $\operatorname{rep}$ map.
While the image of $\operatorname{rep}$ is $C$, the linear map is defined on the full space.
As the following lemma shows, extending the linear map does not alter its condition number.

\begin{lemma}
\label{lem.linearize}
Let $G\leq\operatorname{O}(d)$ be a finite reflection group.
Given $T:=\{t_i\}_{i=1}^n\in(\mathbb{R}^d)^n$, define $\Phi\colon\mathbb{R}^d/G\to\mathbb{R}^n$ and $L\colon\mathbb{R}^d\to\mathbb{R}^n$ by
\[
\Phi(G\cdot x):=\{\llangle G\cdot t_i,G\cdot x\rrangle\}_{i=1}^n,
\qquad
L(x):=\{\langle \operatorname{rep}(G\cdot t_i),x\rangle\}_{i=1}^n.
\]
Then $\Phi$ and $L$ have identical upper and lower Lipschitz bounds.
\end{lemma}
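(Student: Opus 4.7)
The plan is to factor $\Phi$ as $L\circ\operatorname{rep}$ using \eqref{eq.representative representation}, invoke Lemma~\ref{lem.isometry} to transfer Lipschitz information from $\Phi$ to the restriction $L|_{\overline{C}}$, and then use linearity of $L$ together with the cone structure of $\overline{C}$ to identify the bounds of $L|_{\overline{C}}$ with those of $L$ on all of $\mathbb{R}^d$.

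First, I would apply \eqref{eq.representative representation} coordinatewise to obtain
\[
\Phi(G\cdot x)_i
=\llangle G\cdot t_i,G\cdot x\rrangle
=\langle\operatorname{rep}(G\cdot t_i),\operatorname{rep}(G\cdot x)\rangle
=L(\operatorname{rep}(G\cdot x))_i,
\]
so $\Phi=L\circ\operatorname{rep}$. Since $\operatorname{rep}\colon\mathbb{R}^d/G\to\overline{C}$ is a bijective isometry by Lemma~\ref{lem.isometry}, both the upper and lower Lipschitz constants of $\Phi$ are unchanged when we replace it by $L$ restricted to $\overline{C}$.

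It then remains to show that the Lipschitz bounds of $L|_{\overline{C}}$ agree with those of $L\colon\mathbb{R}^d\to\mathbb{R}^n$. Because $L$ is linear, the upper (respectively, lower) Lipschitz constant of $L$ on any subset $D\subseteq\mathbb{R}^d$ equals the supremum (respectively, infimum) of $\|Lv\|/\|v\|$ as $v$ ranges over the nonzero elements of $D-D$. Thus the lemma reduces to the identity $\overline{C}-\overline{C}=\mathbb{R}^d$.

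I expect this last identity to be the only substantive step. Since the Weyl chamber $C$ is a connected component of $\mathbb{R}^d\setminus U$ where $U$ is the finite union of reflecting hyperplanes, $C$ is cut out by finitely many open half-spaces through the origin; hence $\overline{C}$ is a closed convex polyhedral cone. On the one hand, $\overline{C}+\overline{C}\subseteq\overline{C}$, so $\overline{C}-\overline{C}$ is a linear subspace of $\mathbb{R}^d$. On the other hand, $\overline{C}$ contains the nonempty open set $C$, so it spans $\mathbb{R}^d$. These two facts together yield $\overline{C}-\overline{C}=\mathbb{R}^d$, completing the proof.
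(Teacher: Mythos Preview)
Your proof is correct and follows essentially the same approach as the paper: both factor $\Phi=L|_{\overline{C}}\circ\operatorname{rep}$ via the isometry $\operatorname{rep}$, and both exploit the openness of $C$ to show that the Lipschitz bounds of $L|_{\overline{C}}$ coincide with the singular values of $L$. The only cosmetic difference is that the paper argues this by explicitly perturbing a point $y\in C$ in the direction of a singular vector, whereas you package the same idea as the identity $\overline{C}-\overline{C}=\mathbb{R}^d$.
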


\begin{proof}
By Lemma~\ref{lem.isometry}, $\operatorname{rep}\colon \mathbb{R}^d/G\to\overline{C}$ is a bijective isometry.
Letting $L_{\overline{C}}\colon \overline{C}\to\mathbb{R}^n$ denote the restriction of $L$ to $\overline{C}\subseteq\mathbb{R}^d$, we have $\Phi=L_{\overline{C}}\circ \operatorname{rep}$, and so it suffices to show that $L_{\overline{C}}$ and $L$ have identical upper and lower Lipschitz bounds.
Since $L$ is linear, its Lipschitz bounds are precisely its top and bottom singular values, and by restriction, it holds that
\[
\sigma_{\min}(L)
\leq \frac{\|L_{\overline{C}}(x)-L_{\overline{C}}(y)\|}{\|x-y\|}
\leq \sigma_{\max}(L)
\]
for every $x,y\in\overline{C}$ with $x\neq y$.
To show that these bounds are optimal for $L_{\overline{C}}$, select any $y\in C$, and let $z$ be a top (respectively, bottom) right-singular unit vector of $L$.
Since $C$ is open, it holds that $x:=y+\epsilon z\in C$ for an appropriately small $\epsilon>0$, in which case
\[
\frac{\|L_{\overline{C}}(x)-L_{\overline{C}}(y)\|}{\|x-y\|}
=\frac{\|L(x)-L(y)\|}{\|x-y\|}
=\frac{\|L(x-y)\|}{\|x-y\|}
=\frac{\|L(\epsilon z)\|}{\|\epsilon z\|}
=\|L z\|,
\]
which achieves equality in the upper (respectively, lower) Lipschitz bound, as desired.
\end{proof}

To find the best conditioned max filtering map, Lemma~\ref{lem.linearize} suggests finding templates $\{t_i\}_{i=1}^{n}$ for which the vectors $\{\operatorname{rep}(G\cdot t_i)\}_{i=1}^n$ form the rows of an optimally conditioned matrix.
Since these vectors are constrained to reside in the closed Weyl chamber $\overline{C}$, this optimization problem motivates the following definition.

\begin{definition}
The \textbf{optimal condition number} $\kappa(A)$ of a set $A\subseteq\mathbb{R}^d$ is the infimum of quotients $C/c$ for which there exists $n\in\mathbb{N}$ and vectors $\{t_i\}_{i=1}^n$ in $A$ such that
\[
c\|x\|\leq\bigg(\sum_{i=1}^n|\langle t_i,x\rangle|^2\bigg)^{1/2}\leq C\|x\|
\qquad
\forall x\in\mathbb{R}^d.
\]
Notably, $C/c$ is the condition number of the $d\times n$ matrix with columns $\{t_i\}_{i=1}^n$.
\end{definition}

The optimal condition number of a (possibly nonconvex) cone interacts nicely with direct sums in the sense that the worst distortion is the distortion of the worst behaved summand.
This is made precise in the following lemma.

\begin{lemma}
\label{lem.direct sum condition number}
Given spanning sets $X\subseteq\mathbb{R}^{d_X}$ and $Y\subseteq\mathbb{R}^{d_Y}$ that are closed under positive scalar multiplication, the direct sum $X\oplus Y\subseteq\mathbb{R}^{d_X+d_Y}$ has optimal condition number
\[
\kappa(X\oplus Y)=\max\{\kappa(X),\kappa(Y)\}.
\]
\end{lemma}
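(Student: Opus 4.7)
The plan is to establish both inequalities in $\kappa(X\oplus Y)=\max\{\kappa(X),\kappa(Y)\}$, interpreting $X\oplus Y$ as the Cartesian product $\{(x,y):x\in X,\,y\in Y\}\subseteq\mathbb{R}^{d_X+d_Y}$ (consistent with the Weyl-chamber application in Theorem~\ref{thm.main}(c)). For the lower bound, I will start with an arbitrary matrix $T$ whose columns are templates in $X\oplus Y$ and partition it horizontally into blocks
\[
T=\begin{pmatrix}T_X\\ T_Y\end{pmatrix},
\]
so that $T_X$ and $T_Y$ have columns in $X$ and $Y$ respectively. Restricting the quadratic form $v\mapsto\|T^{\top}v\|^2$ to vectors of the form $(u,0)$ reduces it to $\|T_X^{\top}u\|^2$, which immediately gives $\sigma_{\min}(T)\le\sigma_{\min}(T_X)$ and $\sigma_{\max}(T)\ge\sigma_{\max}(T_X)$, hence $\operatorname{cond}(T)\ge\operatorname{cond}(T_X)\ge\kappa(X)$; the analogous restriction to $(0,v)$ gives $\operatorname{cond}(T)\ge\kappa(Y)$, and taking the infimum over $T$ yields the lower bound. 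The degenerate case where $T_X$ fails to span is harmless, since then $\operatorname{cond}(T)=\infty$.

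For the upper bound, I will fix $\epsilon>0$ and select near-optimal templates $\{x_i\}_{i=1}^{n_X}\subseteq X$ with $\operatorname{cond}(A_X)\le\kappa(X)+\epsilon$ and $\{y_j\}_{j=1}^{n_Y}\subseteq Y$ with $\operatorname{cond}(A_Y)\le\kappa(Y)+\epsilon$, and then combine them. The natural candidate is the block-diagonal matrix $\operatorname{diag}(A_X,A_Y)$, whose singular values form the multiset union of those of $A_X$ and $A_Y$; after uniformly rescaling the two template families (using closure under positive scalar multiplication) so that $\sigma_{\min}(A_X)=\sigma_{\min}(A_Y)$, its condition number becomes exactly $\max\{\operatorname{cond}(A_X),\operatorname{cond}(A_Y)\}\le\max\{\kappa(X),\kappa(Y)\}+\epsilon$. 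Because $0$ need not lie in $X$ or $Y$, the axis-aligned templates $(x_i,0)$ and $(0,y_j)$ may fail to belong to $X\oplus Y$, so I instead fix auxiliary $x_0\in X$ and $y_0\in Y$ and use the perturbed family
\[
\{(x_i,\delta y_0)\}_{i=1}^{n_X}\cup\{(\delta x_0,y_j)\}_{j=1}^{n_Y}\subseteq X\oplus Y.
\]
Continuity of singular values in the matrix entries makes the condition number of this configuration tend to $\max\{\operatorname{cond}(A_X),\operatorname{cond}(A_Y)\}$ as $\delta\to 0^+$, so the infimum defining $\kappa(X\oplus Y)$ is at most $\max\{\kappa(X),\kappa(Y)\}+\epsilon$; letting $\epsilon\to 0$ then completes the proof.

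The main subtlety is the upper bound. Without $0\in X\cap Y$ one cannot use axis-aligned templates directly, so the $\delta$-perturbation combined with a continuity argument is essentially forced. The preliminary rescaling step is also essential: without matching $\sigma_{\min}(A_X)$ and $\sigma_{\min}(A_Y)$ one would only control the block-diagonal condition number by the potentially larger ratio $\max\{\sigma_{\max}(A_X),\sigma_{\max}(A_Y)\}/\min\{\sigma_{\min}(A_X),\sigma_{\min}(A_Y)\}$ rather than $\max\{\operatorname{cond}(A_X),\operatorname{cond}(A_Y)\}$.
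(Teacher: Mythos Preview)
Your argument is correct and follows the same two-part strategy as the paper: for the lower bound you partition an arbitrary template matrix horizontally and restrict the quadratic form to each coordinate block, exactly as the paper does; for the upper bound you rescale so that $\sigma_{\min}(A_X)=\sigma_{\min}(A_Y)$ and form a block-diagonal matrix, again matching the paper. The only substantive difference is that the paper simply takes $W$ to be block diagonal with blocks $U$ and $V$, so its columns are $(u_i,0)$ and $(0,v_j)$; this tacitly assumes $0\in X\cap Y$, which is not part of the stated hypotheses but does hold in the intended application (closed Weyl chambers contain the origin). Your $\delta$-perturbation with auxiliary points $x_0,y_0$ and the continuity of singular values is a clean way to close that gap under the hypotheses as literally written, and it costs nothing once one observes that the infimum defining $\kappa$ tolerates approximation.
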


\begin{proof}
To further overload notation, given $A\in\mathbb{R}^{d\times n}$, we let $\kappa(A)$ denote the condition number of $A$.
It suffices to show two things:
\begin{itemize}
\item[(a)]
For all matrices $U$ and $V$ with columns from $X$ and $Y$, respectively, there exists a matrix $W$ with columns from $X\oplus Y$ such that $\kappa(W)\leq\max\{\kappa(U),\kappa(V)\}$.
\item[(b)]
For every matrix $W$ with columns from $X\oplus Y$, there exist matrices $U$ and $V$ with columns from $X$ and $Y$, respectively, such that $\max\{\kappa(U),\kappa(V)\}\leq\kappa(W)$.
\end{itemize}
For (a), we may assume $\kappa(U),\kappa(V)<\infty$.
Scale $U$ and $V$ so that $\sigma_{\min}(U)=\sigma_{\min}(V)=1$.
Taking $W$ to be block diagonal with blocks $U$ and $V$, then $\sigma_{\min}(W)=1$ and so
\[
\kappa(W)
=\max\{\sigma_{\max}(U),\sigma_{\max}(V)\}
=\max\{\kappa(U),\kappa(V)\}.
\]
For (b), we may assume $\kappa(W)<\infty$.
We may decompose $W$ as
\[
W=\left[\begin{array}{c}
U\\
V
\end{array}\right],
\]
where $U$ has columns from $X$ and $V$ has columns from $Y$.
We claim that 
\[
0
<\sigma_{\min}(W)
\leq\sigma_{\min}(U)
\leq\sigma_{\max}(U)
\leq\sigma_{\max}(W)
<\infty,
\]
and similarly for $V$; this in turn implies the desired inequality.
Indeed,
\[
\sigma_{\max}(W)
=\max_{\|x\|=1}\|W^\top x\|
\geq\max_{\|y\|=1} \left\|\left[\begin{array}{cc}
U^\top&V^\top
\end{array}\right]
\left[\begin{array}{c}y\\0\end{array}\right]\right\|
=\max_{\|y\|=1}\|U^\top y\|
=\sigma_{\max}(U),
\]
and a similar argument gives the desired bound for $\sigma_{\min}(U)$.
\end{proof}

We are now ready to prove Theorem~\ref{thm.main}(c).

\begin{proof}[Proof of Theorem~\ref{thm.main}(c)]
Combining Lemmas~\ref{lem.linearize} and~\ref{lem.direct sum condition number} gives $\kappa(G)=\max_i\kappa(G_i)$.
Next, we view $\mathbb{R}^d$ as a direct sum of $\mathbb{R}^{d_i}$ according to the factorization of $G$, and let $B_i$ denote the matrix whose columns are the minimal optimal templates for $\mathbb{R}^{d_i}$.
Then by Lemma~\ref{lem.spectral phenomenon}(b) and Theorem~\ref{thm.main}(a) and~(b),
\[
\frac{1}{\sigma_{\max}(B_i)^2}+\frac{1}{\sigma_{\min}(B_i)^2}=2,
\]
from which is follows that $\sigma_{\min}(B_i)$ and $\sigma_{\max}(B_i)$ are inversely related.
Take $B\in\mathbb{R}^{d\times d}$ to be block diagonal with blocks $B_i$.
Then for all $i$, it holds that
\[
\sigma_{\min}(B)
\leq\sigma_{\min}(B_i)
\leq\sigma_{\max}(B_i)
\leq\sigma_{\max}(B).
\]
Furthermore, since $\sigma_{\min}(B_i)$ and $\sigma_{\max}(B_i)$ are inversely related, there exists $j$ such that both $\sigma_{\min}(B_j)=\sigma_{\min}(B)$ and $\sigma_{\max}(B_j)=\sigma_{\max}(B)$.
It follows that
\[
\frac{\sigma_{\max}(B)}{\sigma_{\min}(B)}
=\frac{\sigma_{\max}(B_j)}{\sigma_{\min}(B_j)}
=\max_i\frac{\sigma_{\max}(B_i)}{\sigma_{\min}(B_i)}
=\max_i\kappa(G_i)
=\kappa(G),
\]
i.e., the columns of $B$ are optimal templates.
Finally, since a finite condition number requires $B$ to have at least $d$ columns, $B$ is minimal.
\end{proof}

\section{Proof of Theorem~\ref{thm.main}(b)}
\label{sec.proof of thm.main}

In this section, we leverage convex duality to establish Theorem~\ref{thm.main}(b).
Our reduction to convex programming is summarized by the following:

\begin{lemma}
\label{lem.convex programming reduction}
Let $G\leq\operatorname{O}(d)$ be a finite reflection group with Weyl chamber $C\subseteq\mathbb{R}^d$.
Denote
\[
\alpha:=\inf\Bigg\{\bigg\|\sum_{i=1}^n t_it_i^\top-I_d\bigg\|_{2\to2}:n\in\mathbb{N},t_1,\ldots,t_n\in\overline{C}\Bigg\}.
\]
Then $\kappa(G)=\sqrt{\frac{1+\alpha}{1-\alpha}}$.
\end{lemma}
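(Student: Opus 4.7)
The plan is to use Lemma~\ref{lem.linearize} to replace the Lipschitz optimization by a singular-value optimization, exploit the cone structure of $\overline{C}$ to absorb positive scalings, and then reduce the operator-norm quantity $\alpha$ to an elementary function of $\kappa(G)$.

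First, Lemma~\ref{lem.linearize} together with the identity $\operatorname{rep}(G\cdot t)=t$ for $t\in\overline{C}$ shows that
\[
\kappa(G)=\inf_T \frac{\sigma_{\max}(T)}{\sigma_{\min}(T)},
\]
where $T$ ranges over all $d\times n$ matrices (arbitrary $n\in\mathbb{N}$) whose columns lie in $\overline{C}$. Writing $M_T:=TT^\top=\sum_i t_it_i^\top$, this reads $\kappa(G)=\inf_T\sqrt{\lambda_{\max}(M_T)/\lambda_{\min}(M_T)}$.

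Second, since $\overline{C}$ is closed under multiplication by positive scalars, the infimum defining $\alpha$ factors through a scale parameter:
\[
\alpha=\inf_T\inf_{\lambda>0}\|\lambda^2 M_T-I_d\|_{2\to2}.
\]
For fixed $T$ with $a:=\lambda_{\min}(M_T)$ and $b:=\lambda_{\max}(M_T)$, the spectrum of $\lambda^2 M_T-I_d$ lies in $[\lambda^2 a-1,\lambda^2 b-1]$, so the inner norm equals $\max(\lambda^2 b-1,\,1-\lambda^2 a)$. One argument is strictly increasing in $\lambda^2$ and the other strictly decreasing, so the minimum is attained at their crossing point $\lambda^2=2/(a+b)$, with common value
\[
\frac{b-a}{b+a}=\frac{\kappa(T)^2-1}{\kappa(T)^2+1}=:f(\kappa(T)).
\]
(When $a=0$ this should be read as the limiting value $1$, which still matches $\inf_{\lambda>0}\max(\lambda^2 b-1,1)$.)

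Third, $f(x)=(x^2-1)/(x^2+1)$ is continuous and strictly increasing on $[1,\infty]$, so
\[
\alpha=\inf_T f(\kappa(T))=f\Big(\inf_T \kappa(T)\Big)=f(\kappa(G))=\frac{\kappa(G)^2-1}{\kappa(G)^2+1}.
\]
Solving for $\kappa(G)$ yields the stated formula $\kappa(G)=\sqrt{(1+\alpha)/(1-\alpha)}$. I do not anticipate any substantive obstacle here: the cone hypothesis provides exactly the rescaling freedom needed to perform the inner minimization in closed form, and the outer interchange is justified by continuity and monotonicity of $f$.
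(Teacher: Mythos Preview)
Your proof is correct and follows essentially the same approach as the paper's: both use Lemma~\ref{lem.linearize} to reduce to a singular-value optimization over matrices with columns in $\overline{C}$, exploit the cone structure to absorb a global scaling, and observe that the optimal scaling makes the extreme eigenvalues equidistant from $1$, yielding the value $(b-a)/(a+b)=(\kappa^2-1)/(\kappa^2+1)$. The paper's proof is simply a terse version of your argument, omitting the explicit verification that $\max(\lambda^2 b-1,\,1-\lambda^2 a)$ computes the inner norm and that the monotone bijection $f$ commutes with the infimum.
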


\begin{proof}
By Lemma~\ref{lem.linearize}, every max filter bank shares upper and lower Lipschitz bounds with a linear map $L\colon\mathbb{R}^d\to\mathbb{R}^n$ of the form $L(x)=\{\langle t_i,x\rangle\}_{i=1}^n$ for some $t_1,\ldots,t_n\in\overline{C}$.
These bounds are the square roots of the top and bottom eigenvalues of $LL^*$, whose matrix representation is $\sum_{i=1}^nt_it_i^\top$.
The scaling of $L$ that makes the eigenvalues of $LL^*$ uniformly closest to $1$ makes the top and bottom eigenvalues exhibit a common distance $r$ from $1$.
Meanwhile, this scaling does not affect the quotient $\sqrt{\frac{1+r}{1-r}}$ of Lipschitz bounds.
The result follows.
\end{proof}

The definition of $\alpha$ in Lemma~\ref{lem.convex programming reduction} was directly inspired by Cahill and Chen's treatment of scalable frames in~\cite{CahillC:13}.
Scalable frames are concerned with best choices of vectors belonging to a particular union of $1$-dimensional cones (i.e., rays).
Our result replaces this union with a particular closed convex cone, namely, a Weyl chamber of a finite reflection group.

\textbf{For the remainder of this section, we assume that $G$ is an essential and irreducible finite reflection group, and we use the following notation:} 
Take $A\in\mathbb{R}^{d\times d}$ to be the matrix whose unit-norm rows form the fundamental system corresponding to the Weyl chamber $C$, and put $B:=A^{-1}$.
Then
\[
\overline{C}
=\{x\in\mathbb{R}^d:Ax\geq0\}
=\{By:y\geq0\}.
\]
Observe that for each $t\in\overline{C}$, there exists $y\geq0$ such that $tt^\top = Byy^\top B^\top$.
By factoring out $B$ and $B^\top$, arbitrary sums of such $tt^\top$ correspond to arbitrary sums of $yy^\top$, which in turn make up the convex cone $\operatorname{CP}(d)$ of $d\times d$ \textbf{completely positive matrices}.
Thus,
\[
\bigg\{\sum_{i=1}^n t_it_i^\top:n\in\mathbb{N},t_1,\ldots,t_n\in\overline{C}\bigg\}
=\Big\{BMB^\top:M\in\operatorname{CP}(d)\Big\}.
\]
In particular, the program in Lemma~\ref{lem.convex programming reduction} can be reformulated as
\begin{equation}
\label{eq.main primal}
\text{minimize}
\quad
\|BMB^\top-I_d\|_{2\to2}
\quad
\text{subject to}
\quad
M\in\operatorname{CP}(d).
\end{equation}
Our approach is to prove a sequence of technical conditions on the optimizers of \eqref{eq.main primal}.

\begin{lemma}
\label{lem.opt identity}
There exists an optimizer of \eqref{eq.main primal} that is a positive multiple of $I_d$.
\end{lemma}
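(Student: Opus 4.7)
The plan is to explicitly produce a positive multiple of $I_d$ that attains the infimum in \eqref{eq.main primal}, and to verify its optimality via a matching lower bound on $\alpha:=\|BMB^\top - I_d\|_{2\to 2}$ over $M\in\operatorname{CP}(d)$. The heart of the argument is the inequality $v_1^\top M v_1 \leq v_d^\top M v_d$ for every $M\in\operatorname{CP}(d)$, where $v_1$ and $v_d$ denote the top and bottom eigenvectors of $AA^\top$; this inequality pairs with the PSD sandwich expressing the operator-norm constraint to pin down the optimal value.

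To obtain this key inequality, I first invoke the proof of Lemma~\ref{lem.acute cone}, which shows that $B^\top B = (AA^\top)^{-1}$ has strictly positive entries. Since $B^\top B$ is symmetric and entrywise positive, Perron--Frobenius ensures that its top eigenvector---namely the bottom eigenvector $v_d$ of $AA^\top$---may be chosen with strictly positive entries. Combining this sign information with Lemma~\ref{lem.spectral phenomenon}(a) gives $|(v_1)_i|=(v_d)_i$ for every $i\in\{1,\dots,d\}$. For any $y\in\mathbb{R}^d$ with $y_i\geq 0$, the triangle inequality then yields
\[
|v_1^\top y| \leq \sum_{i=1}^d |(v_1)_i|\,y_i = \sum_{i=1}^d (v_d)_i\,y_i = v_d^\top y.
\]
Squaring and summing over any completely positive decomposition $M=\sum_k y_k y_k^\top$ (with each $y_k\geq 0$) produces $v_1^\top M v_1 \leq v_d^\top M v_d$ for every $M\in\operatorname{CP}(d)$.

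To turn this into the desired lower bound, I observe that $\|BMB^\top - I_d\|_{2\to 2}\leq\alpha$ is equivalent (by congruence with $A=B^{-1}$) to $(1-\alpha)\,AA^\top \preceq M \preceq (1+\alpha)\,AA^\top$. Evaluating this sandwich at $v_1$ and $v_d$, which are $AA^\top$-eigenvectors with eigenvalues $\sigma_{\max}(A)^2$ and $\sigma_{\min}(A)^2$ respectively, gives
\[
(1-\alpha)\,\sigma_{\max}(A)^2 \leq v_1^\top M v_1 \leq v_d^\top M v_d \leq (1+\alpha)\,\sigma_{\min}(A)^2,
\]
so $\alpha \geq \alpha^\star := (\sigma_{\max}(A)^2 - \sigma_{\min}(A)^2)/(\sigma_{\max}(A)^2 + \sigma_{\min}(A)^2)$.

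Finally, I set $c := 2\,\sigma_{\max}(A)^2\,\sigma_{\min}(A)^2/(\sigma_{\max}(A)^2 + \sigma_{\min}(A)^2)$ and check that $M^\star := c\, I_d$ is feasible (trivially $M^\star\in\operatorname{CP}(d)$) and attains $\alpha^\star$; this is a single-variable calculation using the fact that the eigenvalues of $BB^\top$ are the reciprocals of those of $AA^\top$. I expect the one nontrivial step to be the $\operatorname{CP}$ inequality in the second paragraph: Lemma~\ref{lem.spectral phenomenon}(a) only controls the magnitudes $|(v_1)_i|$, and it is essential to combine it with the Perron--Frobenius positivity of $v_d$ (delivered by Lemma~\ref{lem.acute cone}) to promote the entrywise magnitude identity into an order relation on the quadratic forms $v_1^\top M v_1$ and $v_d^\top M v_d$.
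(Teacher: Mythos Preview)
Your argument is correct and considerably more direct than the paper's. The paper proceeds via convex duality in two stages: it first proves that \eqref{eq.main primal} has a \emph{diagonal} optimizer (Lemma~\ref{lem.opt diag}), by showing that an optimizer of the dual of the diagonal restriction is feasible in the full dual~\eqref{eq.main dual}; this step uses complementary slackness, the symmetric von Neumann trace inequality, and a geometric argument with an ellipsoid. It then restricts further to scalar multiples of $I_d$ and repeats the duality trick, invoking Lemma~\ref{lem.spectral phenomenon}(a) only at the very end to certify that the optimizer of the scalar dual~\eqref{eq.scalar dual} is feasible in the diagonal dual~\eqref{eq.diagonal dual}.

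You bypass all of this: from Lemma~\ref{lem.spectral phenomenon}(a) together with the Perron--Frobenius positivity of $v_d$ (extracted from the proof of Lemma~\ref{lem.acute cone}), you obtain the pointwise bound $|(v_1)_i|=(v_d)_i$, which immediately gives $v_1^\top M v_1\le v_d^\top M v_d$ for every $M\in\operatorname{CP}(d)$. Pairing this with the congruence $(1-\alpha)AA^\top\preceq M\preceq(1+\alpha)AA^\top$ yields the sharp lower bound $\alpha\ge\alpha^\star$ directly on the primal, and the explicit $M^\star=cI_d$ saturates it. Your route is shorter, avoids Lemma~\ref{lem.opt diag}, Lemma~\ref{eq.eigenstructure of opt x and Y}, and Proposition~\ref{prop.von neumann} entirely, and makes transparent exactly how Lemma~\ref{lem.spectral phenomenon}(a) enters. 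The paper's duality framework, on the other hand, yields the intermediate structural fact that a diagonal optimizer exists and might adapt more readily to cones where an explicit primal witness is not available.
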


Indeed, Lemma~\ref{lem.opt identity} allows us to prove our result:

\begin{proof}[Proof of Theorem~\ref{thm.main}(b)]
By Lemmas~\ref{lem.opt identity}, \ref{lem.convex programming reduction}, and~\ref{lem.spectral phenomenon}(c), we have
\[
\kappa(G)=\sqrt{\frac{1+\|cBB^\top-I_d\|_{2\to2}}{1-\|cBB^\top-I_d\|_{2\to2}}}
=\sqrt{\frac{\lambda_{\max}(BB^\top)}{\lambda_{\min}(BB^\top)}}
=\frac{\sigma_{\max}(B)}{\sigma_{\min}(B)}
=\frac{\sigma_{\max}(A)}{\sigma_{\min}(A)},
\]
with optimal templates given by the columns of $B$.
\end{proof}

To prove Lemma~\ref{lem.opt identity}, we first prove a weaker result:

\begin{lemma}
\label{lem.opt diag}
There exists an optimizer of \eqref{eq.main primal} that is diagonal.
\end{lemma}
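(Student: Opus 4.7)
The plan is to view \eqref{eq.main primal} as a convex optimization over the completely positive cone $\operatorname{CP}(d)$, show that a minimizer exists, and then produce a diagonal one via semidefinite duality.

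First, I would reformulate the program as $\min t$ subject to $-tI \preceq BMB^\top - I \preceq tI$ and $M \in \operatorname{CP}(d)$. Continuity of the objective together with compactness of the sublevel sets (bounded $t$ forces $BMB^\top$ bounded, hence $M$ bounded since $B$ is invertible) gives attainment at some $M^\star$ with optimum $\alpha$.

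Second, I would restrict attention to diagonal $M = \operatorname{diag}(d_1,\ldots,d_d)$ with $d_i \geq 0$, yielding a genuine SDP in $d+1$ scalar variables with optimum $\alpha_D$. Because diagonal-nonnegative matrices form a subcone of $\operatorname{CP}(d)$, clearly $\alpha \leq \alpha_D$, and the lemma asserts equality. To prove the reverse inequality, I would compute the dual of the diagonal SDP (which, unlike the dual of the full $\operatorname{CP}(d)$-constrained primal, has a tractable description), exhibit an explicit dual optimum $(X^\star, Y^\star)$ with $X^\star, Y^\star \succeq 0$ certifying $\alpha_D$, and then show that the same pair is dual-feasible for the full primal. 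This second feasibility certifies $\alpha_D \leq \alpha$, completing the proof.

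The main obstacle is the lifting step: dual feasibility for the full program demands $B^\top(Y^\star - X^\star) B \in \operatorname{COP}(d)$, i.e., the matrix is copositive. Copositivity is NP-hard to verify in general, but here I expect the special structure of the fundamental system to make the check transparent. Specifically, I anticipate that the dual optimizers take the rank-one form $X^\star \propto v_1 v_1^\top$ and $Y^\star \propto v_d v_d^\top$, where $v_1$ and $v_d$ are the top and bottom eigenvectors of $AA^\top$. The entrywise positivity of $v_d$ (via Perron--Frobenius applied to the irreducible $M$-matrix $AA^\top$, which underlies Lemma \ref{lem.acute cone}) together with the identity $(v_1)_i^2 = (v_d)_i^2$ from Lemma \ref{lem.spectral phenomenon}(a) should collapse the required copositivity into a manifestly nonnegative expression on the nonneg orthant. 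Lemma \ref{lem.spectral phenomenon}(c) may be further invoked to handle degenerate cases where some $d_i^\star = 0$ at the diagonal optimum, effectively reducing the problem to a principal submatrix that inherits the same $\pm$-symmetric spectrum.
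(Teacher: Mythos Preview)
Your dual-lifting scaffold is the same as the paper's: restrict the primal, pass to the restricted dual, and show its optimizer remains feasible for the full copositive dual \eqref{eq.main dual}. But two steps in your execution do not go through as written. First, the dual variable $W=Y^\star-X^\star$ lives in the space of $BMB^\top$, so the correct rank-two ansatz is built from eigenvectors of $BB^\top$ (equivalently $A^\top A$), not of $AA^\top$; with your choice, $B^\top W B$ does not simplify and Lemma~\ref{lem.spectral phenomenon}(a) never enters. Second, even after fixing the eigenvectors, asserting that the \emph{diagonal}-dual optimizer has this rank-two form is circular: it amounts to knowing already that the diagonal primal is optimized at a scalar multiple of $I$, which is precisely Lemma~\ref{lem.opt identity}, a result the paper proves \emph{after} Lemma~\ref{lem.opt diag}. (The appeal to Lemma~\ref{lem.spectral phenomenon}(c) for possible zero entries is also unnecessary, since Lemma~\ref{lem.opt in diag programs}(a) forces $x>0$.)

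The paper's proof of Lemma~\ref{lem.opt diag} does not use Lemma~\ref{lem.spectral phenomenon}(a) at all. It first shows (Lemma~\ref{eq.eigenstructure of opt x and Y}, via Perron--Frobenius on the entrywise-positive matrix $DB^\top BD$ and the equality case of the von Neumann trace inequality) that any optimizer $Y$ of \eqref{eq.diagonal dual} has a unique simple negative eigenvalue whose eigenvector lies in the open chamber $C$. Complementary slackness gives $b_i^\top Y b_i=0$ for each column $b_i$ of $B$, so after diagonalizing $Y=UDU^\top$, each $U^\top b_i$ lies in the convex cone $\{z:z^\top Dz\leq0,\ z_1\geq0\}$; convexity then traps all of $U^\top\overline{C}$ inside, which is exactly the copositivity of $-B^\top YB$. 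That geometric convexity step is the idea your outline is missing.

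That said, your instinct to invoke Lemma~\ref{lem.spectral phenomenon}(a) is salvageable and leads to a route the paper does not take. If instead of the diagonal restriction you pass directly to the scalar restriction $M=cI$ and its dual \eqref{eq.scalar dual}, the optimizer $Z$ genuinely is rank two in the eigenvectors of $BB^\top$, and $B^\top ZB$ is then a trace-zero rank-two form in the top and bottom eigenvectors of $AA^\top$. Writing $w$ for the bottom eigenvector (entrywise positive) and using $(w')_i=\pm w_i$ for the top one, the triangle inequality gives $|\langle w',y\rangle|\leq\langle w,y\rangle$ for all $y\geq0$, which is exactly $-B^\top ZB\in\operatorname{COP}(d)$. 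This single lift places $Z$ in the feasible set of \eqref{eq.main dual}, forcing the scalar, diagonal, and full optima to coincide and thereby proving Lemma~\ref{lem.opt identity} (hence Lemma~\ref{lem.opt diag}) in one stroke.
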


We will prove Lemma~\ref{lem.opt diag} by observing an interplay between dual and restricted programs.
A restriction of the primal program (for instance to a diagonal primal variable) corresponds to a relaxation of the dual program. 
To show the restriction does not change the value of the primal program, we can show that an optimizer of the relaxed dual program is still feasible in the original dual program. 

The dual program of \eqref{eq.main primal} is naturally expressed in terms of the convex cone $\operatorname{COP}(d)$ of \textbf{copositive matrices}, that is, symmetric matrices $A\in\mathbb{R}^{d\times d}$ such that $x^\top Ax\geq0$ for all $x\geq0$:
\begin{equation}
\label{eq.main dual}
\text{maximize}
\quad
\operatorname{tr}W
\quad
\text{subject to}
\quad
\|W\|_*\leq 1,
\quad
-B^\top WB\in\operatorname{COP}(d).
\end{equation}
Consider the restriction of \eqref{eq.main primal} to diagonal $M\in\operatorname{CP}(d)$, i.e., $M=\operatorname{diag}(x)$ with $x\geq0$:
\begin{equation}
\label{eq.diagonal primal}
\text{minimize}
\quad
\|B\operatorname{diag}(x)B^\top-I_d\|_{2\to2}
\quad
\text{subject to}
\quad
x\geq0.
\end{equation}
The dual of this restriction is a relaxation of \eqref{eq.main dual}:
\begin{equation}
\label{eq.diagonal dual}
\text{maximize}
\quad
\operatorname{tr}Y
\quad
\text{subject to}
\quad
\|Y\|_*\leq 1,
\quad
Y^\top=Y,
\quad
\operatorname{diag}(B^\top YB)\leq0.
\end{equation}
Since the feasibility regions of \eqref{eq.main dual} and \eqref{eq.diagonal primal} each have nonempty interior, they satisfy strong duality by Slater's condition.
We will show that \eqref{eq.diagonal dual} has an optimizer that is feasible in \eqref{eq.main dual}.
By strong duality, this in turn implies Lemma~\ref{lem.opt diag}.
First, we use complementary slackness to derive useful statements about optimizers, for example:

\begin{lemma}\
\label{lem.opt in diag programs}
\begin{itemize}
\item[(a)]
Any optimal $x$ in \eqref{eq.diagonal primal} necessarily satisfies $\|B\operatorname{diag}(x)B^\top-I_d\|_{2\to2}<1$ and $x>0$.
\item[(b)]
Any optimal $Y$ in \eqref{eq.diagonal dual} necessarily satisfies $\operatorname{diag}(B^\top YB)=0$.
\end{itemize}
\end{lemma}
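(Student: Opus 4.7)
The plan is to handle parts (a) and (b) in sequence, with part (a) providing the strict positivity of the primal optimizer that makes the complementary slackness step in part (b) immediate.

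For part (a), I would first establish that the optimal value of \eqref{eq.diagonal primal} is strictly less than $1$ by evaluating the objective at $x = c\mathbf{1}$ for a small positive scalar $c$. This gives $B\operatorname{diag}(x)B^\top = cBB^\top$, and since $B$ is invertible, $BB^\top$ is positive definite with eigenvalues $\mu_1 \geq \cdots \geq \mu_d > 0$. The objective equals $\max_i |c\mu_i - 1|$, which is strictly less than $1$ for any $c \in (0, 2/\mu_1)$. Hence any optimizer $x^*$ satisfies $\|B\operatorname{diag}(x^*)B^\top - I_d\|_{2\to 2} < 1$, which in particular forces $B\operatorname{diag}(x^*)B^\top \succ 0$. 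Conjugating by the invertible matrix $B^{-1}$ yields $\operatorname{diag}(x^*) \succ 0$, so $x_i^* > 0$ for every $i$.

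For part (b), I would invoke complementary slackness from the strong duality between \eqref{eq.diagonal primal} and \eqref{eq.diagonal dual} that the paper establishes via Slater's condition. Using the nuclear-norm/operator-norm duality to linearize the primal objective, the primal value can be written as the saddle problem
\[
\min_{x \geq 0}\ \max_{\|W\|_* \leq 1,\, W = W^\top} \bigl( x^\top \operatorname{diag}(B^\top W B) - \operatorname{tr}(W) \bigr),
\]
and swapping the order of optimization together with the substitution $W \mapsto -Y$ reproduces \eqref{eq.diagonal dual}. At a saddle point with primal optimizer $x^*$ and dual optimizer $Y^*$, the inner minimization in the original order attains its value at $x = x^*$, which forces the complementary slackness identity $(x^*)^\top \operatorname{diag}(B^\top Y^* B) = 0$. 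Since $x_i^* \geq 0$ and $(\operatorname{diag}(B^\top Y^* B))_i \leq 0$ by dual feasibility, every summand in this sum is nonpositive, so each must vanish; combined with $x_i^* > 0$ from part (a), this yields $(\operatorname{diag}(B^\top Y^* B))_i = 0$ for every $i$.

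The main obstacle is the careful derivation of the complementary slackness condition, since the primal objective is an operator norm rather than a linear function; once the minimax saddle-point formulation is in hand, however, the argument reduces to standard KKT analysis. Part (a) is a direct consequence of the positive definiteness of $BB^\top$ together with the invertibility of $B$, and presents no substantial difficulty.
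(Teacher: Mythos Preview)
Your proposal is correct and follows essentially the same approach as the paper: for (a) you exhibit a feasible $x=c\mathbf{1}$ with objective value strictly below $1$ and deduce $x^*>0$ from positive definiteness of $B\operatorname{diag}(x^*)B^\top$ (the paper phrases this contrapositively, noting a zero entry makes the matrix rank deficient), and for (b) you invoke complementary slackness exactly as the paper does. Your write-up is in fact more detailed than the paper's, which simply asserts that (b) follows from complementary slackness; one minor phrasing issue is that in the min--max order the \emph{inner} problem is the maximization over $W$, so the condition you use is really the saddle-point inequality $L(x^*,W^*)\leq L(x,W^*)$ rather than an ``inner minimization in the original order.''
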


\begin{proof}
If $x$ has a zero entry, then $B\operatorname{diag}(x)B^\top$ is rank deficient, and so the objective in \eqref{eq.diagonal primal} at $x$ is at least $1$.
Meanwhile, taking $x:=\sigma_{\max}(B)^{-2}\cdot\mathbf{1}$ gives $0\prec B\operatorname{diag}(x)B^\top\preceq I_d$, and so the objective in \eqref{eq.diagonal primal} at $x$ is strictly less than $1$.
This implies (a), and then (b) follows from complementary slackness.
\end{proof}

We can say even more about the optimal $x$ and $Y$ by leveraging the following version of the von Neumann trace inequality:

\begin{proposition}[symmetric von Neumann trace inequality]
\label{prop.von neumann}
Suppose $A,B\in\mathbb{R}^{n\times n}$ are symmetric with eigenvalues $\lambda_1\leq\cdots\leq\lambda_n$ and $\mu_1\leq\cdots\leq\mu_n$, respectively.
Then
\[
\operatorname{tr}(AB)
\leq\sum_{i=1}^n\lambda_i\mu_i,
\]
with equality precisely when there exist orthonormal $\{u_i\}_{i=1}^n$ in $\mathbb{R}^n$ such that
\[
A=\sum_{i=1}^n\lambda_iu_iu_i^\top
\qquad
\text{and}
\qquad
B=\sum_{i=1}^n\mu_iu_iu_i^\top.
\]
\end{proposition}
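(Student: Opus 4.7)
The plan is to diagonalize both matrices via the spectral theorem, recast $\operatorname{tr}(AB)$ as a bilinear form over a doubly stochastic matrix, and then apply Birkhoff's theorem in concert with the classical rearrangement inequality. The main obstacle will be the equality characterization in the presence of repeated eigenvalues.

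First, I would apply the spectral theorem to write $A = U\Lambda U^\top$ and $B = VMV^\top$, where $U,V$ are orthogonal and $\Lambda = \operatorname{diag}(\lambda_1,\ldots,\lambda_n)$, $M = \operatorname{diag}(\mu_1,\ldots,\mu_n)$. Set $Q := U^\top V$ and define $S$ by $S_{ij} := Q_{ij}^2$. Since $Q$ is orthogonal, its rows and columns are unit vectors, so $S$ is doubly stochastic, and a direct computation using cyclicity of trace yields
\[
\operatorname{tr}(AB) = \operatorname{tr}(\Lambda Q M Q^\top) = \sum_{i,j=1}^n \lambda_i \mu_j S_{ij}.
\]

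For the inequality, Birkhoff's theorem decomposes $S$ as a convex combination $\sum_k \alpha_k P_{\pi_k}$ of permutation matrices $P_{\pi_k}$. By convexity, it suffices to show $\sum_i \lambda_i \mu_{\pi(i)} \leq \sum_i \lambda_i \mu_i$ for every permutation $\pi$ of $[n]$. This is the classical rearrangement inequality: any inversion $i<j$ with $\pi(i)>\pi(j)$ can be removed by transposing the corresponding values of $\pi$ without decreasing the sum, since $(\lambda_j-\lambda_i)(\mu_{\pi(i)}-\mu_{\pi(j)}) \geq 0$ under the assumed orderings. Iterating through inversions terminates at the identity permutation, which therefore realizes the maximum.

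For the equality case, one direction is immediate: if $A = \sum_i \lambda_i u_i u_i^\top$ and $B = \sum_i \mu_i u_i u_i^\top$ in a common orthonormal basis, then expanding gives $\operatorname{tr}(AB) = \sum_i \lambda_i \mu_i$. For the converse, suppose equality holds. Then in the Birkhoff decomposition each $\pi_k$ with $\alpha_k > 0$ must itself achieve equality in the rearrangement step, which forces every inversion of $\pi_k$ to lie within a constant-$\lambda$ range or to involve images lying within a constant-$\mu$ range. Tracking this support condition shows that $S$ (hence $Q$) is block-diagonal with respect to the common refinement of the $\lambda$-eigenspace decomposition of $A$ (in the columns of $U$) and the $\mu$-eigenspace decomposition of $B$ (in the columns of $V$). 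Since $A$ acts as a scalar on each $\lambda$-eigenspace and $B$ acts as a scalar on each $\mu$-eigenspace, within each block of the common refinement one can freely select an orthonormal basis that simultaneously serves as eigenvectors of $A$ and $B$ with the prescribed eigenvalues; patching these together produces the desired common basis $\{u_i\}$. The main subtlety, and where I expect the bulk of the care to be required, is in passing from the convex-combination description of $S$ to an exact block structure on $Q$ itself, and in checking that the per-block bases can be assembled in the correct order so that the eigenvalue sequences $\lambda_i$ and $\mu_i$ are indeed paired as stated.
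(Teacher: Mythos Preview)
Your approach differs substantially from the paper's. The paper first treats the positive definite case by invoking the equality characterization in Carlsson's version of the von Neumann trace inequality (where eigenvalues coincide with singular values), and then uses uniqueness of the polar decomposition to force the two SVD bases to coincide. The general symmetric case is reduced to the positive definite one by the shift $A\mapsto A+\alpha I_n$, $B\mapsto B+\beta I_n$, which preserves eigenvectors and changes both sides of the inequality by the same additive constant. Your Birkhoff--rearrangement route is the classical self-contained alternative and is entirely correct for the inequality itself; it trades an external citation for an elementary combinatorial argument.

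The gap is in your equality analysis: the assertion that $S$ (hence $Q$) must be block-diagonal with respect to the common refinement of the $\lambda$- and $\mu$-blocks is false. Take $n=3$, $\lambda=(1,1,2)$, $\mu=(1,2,2)$; the common refinement is $\{1\},\{2\},\{3\}$, so your claim would force $Q$ to be diagonal. But a direct computation gives $\operatorname{tr}(AB)=\sum_{i,j}\lambda_i\mu_jQ_{ij}^2=7-Q_{31}^2$, so equality holds precisely when $Q_{31}=0$, and for instance the permutation matrix of the $3$-cycle $(1\,3\,2)$ is a valid $Q$. The correct structure is a staircase, not a block diagonal: writing $P_a,R_b$ for the eigenprojections of $A,B$, equality forces the nonnegative array $T_{ab}:=\operatorname{tr}(P_aR_b)$ (which has the eigenspace dimensions as marginals) to equal the unique monotone transportation plan between those marginals, and each resulting zero $T_{ab}=0$ upgrades to $P_aR_b=0$ via positivity of $P_aR_bP_a$. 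From those orthogonality relations one can then build the common ordered eigenbasis by walking along the staircase. So your plan is repairable, but not through the block-diagonal picture you describe; the paper's shifting trick sidesteps this combinatorics entirely.
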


\begin{proof}
First, $A$ and $B$ of the prescribed form achieve equality in the given bound.
It remains to prove the bound and then show that equality requires $A$ and $B$ to have this form.

Consider the case where $A$ and $B$ are both positive definite.
Then the eigenvalues coincide with singular values, and so Theorem~3.1 in~\cite{Carlsson:21} gives the bound, with equality precisely when there exist $V,W\in\operatorname{O}(n)$ such that $A=V\operatorname{diag}(\lambda)W^\top$ and $B=V\operatorname{diag}(\mu)W^\top$.
Since $A$ is positive definite, it has polar decomposition $A=(VW^\top)(W\operatorname{diag}(\lambda)W^\top)$, whose uniqueness implies $VW^\top=I_n$, i.e., $V=W$.
This resolves the positive definite case.

Now suppose $A$ and $B$ are symmetric, but not necessarily positive definite.
Then there exist $\alpha,\beta\in\mathbb{R}$ such that $A+\alpha I_n,B+\beta I_n\succ0$.
Our analysis above then gives
\begin{align*}
\operatorname{tr}(AB)
&=\operatorname{tr}((A+\alpha I_n)(B+\beta I_n))-\beta\operatorname{tr}A-\alpha\operatorname{tr}(B)-\alpha\beta n\\
&\leq\sum_{i=1}^n(\lambda_i+\alpha)(\mu_i+\beta)-\beta\sum_{i=1}^n\lambda_i-\alpha\sum_{i=1}^n\mu_i-\alpha\beta n
=\sum_{i=1}^n\lambda_i\mu_i,
\end{align*}
with equality precisely when there exist orthonormal $\{u_i\}_{i=1}^n$ in $\mathbb{R}^n$ such that
\[
A+\alpha I_n=\sum_{i=1}^n(\lambda_i+\alpha)u_iu_i^\top
\qquad
\text{and}
\qquad
B+\beta I_n=\sum_{i=1}^n(\mu_i+\beta)u_iu_i^\top.
\]
The result then follows since $\sum_iu_iu_i^\top=I_n$.
\end{proof}

Next, we show that for every optimal $x$ in \eqref{eq.diagonal primal} and $Y$ in \eqref{eq.diagonal dual}, the top eigenvector of $B\operatorname{diag}(x)B^\top$ is the bottom eigenvector of $Y$, and it resides in $C$.

\begin{lemma}\
\label{eq.eigenstructure of opt x and Y}
\begin{itemize}
\item[(a)]
For every optimal $x$ in \eqref{eq.diagonal primal}, the top eigenvalue of $B\operatorname{diag}(x)B^\top$ is simple, and the corresponding eigenspace is spanned by a positive combination of the columns of $B$.
\item[(b)]
Every optimal $Y$ in \eqref{eq.diagonal dual} has a unique negative eigenvalue, this eigenvalue is simple, and the corresponding eigenspace is identical to the top eigenspace in (a).
\end{itemize}
\end{lemma}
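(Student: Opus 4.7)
The plan is to combine strong duality between \eqref{eq.diagonal primal} and \eqref{eq.diagonal dual}, the symmetric von Neumann trace inequality (Proposition~\ref{prop.von neumann}), and a Perron--Frobenius argument driven by the entrywise positivity of $B^\top B$ (noted in the proof of Lemma~\ref{lem.acute cone}).

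First I would fix an optimal primal--dual pair $(x, Y)$ with common value $\alpha$ (strong duality holds by Slater's condition) and set $M := B\operatorname{diag}(x)B^\top$. Lemma~\ref{lem.opt in diag programs}(b) gives $\operatorname{diag}(B^\top Y B) = 0$, yielding the key trace identity
$$\operatorname{tr}(MY) = \sum_i x_i (B^\top YB)_{ii} = 0, \qquad \operatorname{tr}((M-I)Y) = -\alpha.$$
With eigenvalues $\theta_1 \leq \cdots \leq \theta_d$ of $M-I$ and $\eta_1 \leq \cdots \leq \eta_d$ of $Y$, applying Proposition~\ref{prop.von neumann} to $-(M-I)$ and $Y$ gives
$$\operatorname{tr}((M-I)Y) \geq \sum_i \theta_i\eta_{d+1-i} \geq -\alpha\sum_i|\eta_{d+1-i}| \geq -\alpha,$$
using $|\theta_i| \leq \alpha$ and $\sum_i|\eta_i| = \|Y\|_* \leq 1$. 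Since both ends equal $-\alpha$, all three inequalities are tight. This forces $\|Y\|_* = 1$, a common orthonormal eigenbasis $\{v_i\}_{i=1}^d$ with $M - I = \sum_i \theta_i v_i v_i^\top$ and $Y = \sum_i \eta_{d+1-i} v_i v_i^\top$, and the sign rule $\eta_{d+1-i} > 0 \Rightarrow \theta_i = -\alpha$, $\eta_{d+1-i} < 0 \Rightarrow \theta_i = \alpha$. Since $\operatorname{tr}(Y) = \alpha < 1 = \|Y\|_*$, some $\eta_i < 0$, so $\theta_d = \alpha$ and the top eigenspace of $M - I$ (equivalently, the $(1+\alpha)$-eigenspace of $M$) contains every negative eigenvector of $Y$.

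Next I would upgrade this to simplicity via Perron--Frobenius. Let $D := \operatorname{diag}(x)^{1/2}$, whose entries are strictly positive by Lemma~\ref{lem.opt in diag programs}(a). The matrices $M = (BD)(BD)^\top$ and $(BD)^\top(BD) = DB^\top BD$ share spectra, and $DB^\top BD$ is entrywise positive. By Perron--Frobenius, its top eigenvalue is simple with strictly positive eigenvector $u$, so $w := BDu = B(Du)$ is a top eigenvector of $M$ that is a positive combination of the columns of $B$. This establishes (a); then (b) follows immediately, since the negative eigenvectors of $Y$ all lie in the one-dimensional top eigenspace of $M$, forcing $Y$ to have exactly one negative eigenvalue---necessarily simple---with the same eigenspace as in (a).

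The hardest part will be the bookkeeping in the equality analysis of Proposition~\ref{prop.von neumann}, in particular cleanly extracting the sign rule and the simultaneous diagonalization from the three-term chain of inequalities; once those are in hand, Perron--Frobenius supplies the simplicity and positivity with little extra work.
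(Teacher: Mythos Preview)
Your proposal is correct and essentially mirrors the paper's proof: the Perron--Frobenius argument for part~(a) is identical, and for~(b) both you and the paper analyze equality in the same von~Neumann/H\"older weak-duality chain to force simultaneous diagonalization and the sign pattern of $Y$'s eigenvalues. The only difference is ordering---you extract the common eigenbasis and sign rule first and then invoke Perron--Frobenius, whereas the paper proves~(a) first and feeds its simplicity conclusion into the equality analysis for~(b).
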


\begin{proof}
For (a), let $D$ denote the square root of $\operatorname{diag}(x)$, and put $S:=BD$.
Since $S$ is square, we have that $B\operatorname{diag}(x)B^\top=SS^\top$ has the same spectrum as $S^\top S=DB^\top BD$ with the same multiplicities.
By Lemma~\ref{lem.acute cone}, $B^\top B$ is entrywise positive.
Since $x>0$ by Lemma~\ref{lem.opt in diag programs}(a), it holds that $S^\top S$ is also entrywise positive, and so Perron--Frobenius gives that the top eigenvalue is simple with an entrywise positive eigenvector $v$.
Then the top eigenvector of $B\operatorname{diag}(x)B^\top=SS^\top$ is $Sv=BDv$, i.e., a positive combination of the columns of $B$.

For (b), we first give a proof of weak duality between \eqref{eq.diagonal primal} and \eqref{eq.diagonal dual}:
For every feasible $x$ in \eqref{eq.diagonal primal} and $Y$ in \eqref{eq.diagonal dual}, if we let $\lambda,\mu\in\mathbb{R}^d$ denote the vectors of sorted eigenvalues of $Y$ and $I_d-B\operatorname{diag}(x)B^\top$, respectively, then symmetric von Neumann and H\"{o}lder together give
\begin{align}
\operatorname{tr}Y
\nonumber
&\leq\operatorname{tr}Y - \langle B^\top YB,\operatorname{diag}(x)\rangle\\
\nonumber
&=\langle Y,I_d-B\operatorname{diag}(x)B^\top\rangle\\
\label{eq.weak duality between diag programs}
&\leq\langle \lambda,\mu\rangle
\leq\|\lambda\|_1\|\mu\|_\infty
=\|Y\|_*\|B\operatorname{diag}(x)B^\top-I_d\|_{2\to2}
\leq\|B\operatorname{diag}(x)B^\top-I_d\|_{2\to2}.
\end{align}
If $x$ is optimal, then we may combine a portion of \eqref{eq.weak duality between diag programs} with Lemma~\ref{lem.opt in diag programs}(a) to get
\[
\operatorname{tr}Y
\leq\|Y\|_*\|B\operatorname{diag}(x)B^\top-I_d\|_{2\to2}
<\|Y\|_*.
\]
Thus, every feasible $Y$ has a strictly negative eigenvalue, i.e., $\lambda_1<0$.
Next, any optimal $x$ and $Y$ together achieve equality in the following portion of \eqref{eq.weak duality between diag programs}:
\[
\langle \lambda,\mu\rangle
\leq\|\lambda\|_1\|\mu\|_\infty.
\]
Equality requires $\operatorname{supp}(\lambda)\subseteq\arg\max_i|\mu_i|$ and $\lambda_i\mu_i\geq0$ for all $i$.
Since $\lambda_1<0$, we have $1\in\arg\max_i|\mu_i|$.
As a consequence of the simplicity conclusion of part~(a), $\mu$ is nonzero, meaning $|\mu_1|=\max_i|\mu_i|>0$, which, combined with $\lambda_1<0$ and $\lambda_1\mu_1\geq0$, implies $\mu_1<0$.
Next, part~(a) gives $\mu_1<\mu_i$ for all $i>1$.
It follows that any $j\in\arg\max_i|\mu_i|$ with $j\neq 1$ must have $\mu_j>0$, and so $\lambda_j\geq0$.
As such, $\lambda_1$ is the only negative eigenvalue of $Y$.

Finally, any optimal $x$ and $Y$ achieve equality in the following portion of \eqref{eq.weak duality between diag programs}:
\[
\langle Y,I_d-B\operatorname{diag}(x)B^\top\rangle
\leq\langle \lambda,\mu\rangle.
\]
By part~(a), we have $\mu_1<\mu_2$, while the above gives $\lambda_1<\lambda_2$.
As such, Proposition~\ref{prop.von neumann} gives that the bottom eigenspace of $Y$ is identical to the top eigenspace of $B\operatorname{diag}(x)B^\top$.
\end{proof}

\begin{proof}[Proof of Lemma~\ref{lem.opt diag}]
Given an optimizer $x$ of \eqref{eq.diagonal primal}, we will show that $M:=\operatorname{diag}(x)$ is an optimizer of \eqref{eq.main primal}.
Such a choice of $M$ is feasible in \eqref{eq.main primal}.
By strong duality, it suffices to show that any optimal $Y$ in \eqref{eq.diagonal dual} is feasible in \eqref{eq.main dual}.
To this end, by Lemma~\ref{eq.eigenstructure of opt x and Y}, we may diagonalize $Y=UDU^\top$ so that the first column of $U$ is a positive combination of the columns of $B$ and $D=\operatorname{diag}(\lambda_1,\ldots,\lambda_d)$ with $\lambda_1<0\leq\lambda_2\leq\cdots\leq\lambda_d$.
Consider the ellipsoid
\[
E
:=\{z\in\mathbb{R}^d:z^\top Dz\leq0,z_1=1\}
=\{z\in\mathbb{R}^d:\lambda_2z_2^2+\cdots+\lambda_dz_d^2\leq|\lambda_1|,z_1=1\}.
\]
Notably, $E$ is convex.
Fix $i\in[d]$ and let $b_i$ denote the $i$th column of $B$.
We claim that $v:=U^\top b_i\in\operatorname{cone}(E)$.
By Lemma~\ref{lem.opt in diag programs}(b), we have
\[
-|\lambda_1|v_1^2+\lambda_2v_2^2+\cdots+\lambda_dv_d^2
=v^\top Dv
=b_i^\top Yb_i
=(B^\top YB)_{ii}
=0.
\]
Rearranging then gives
\[
\lambda_2v_2^2+\cdots+\lambda_dv_d^2
=|\lambda_1|v_1^2.
\]
Lemmas~\ref{eq.eigenstructure of opt x and Y} and~\ref{lem.acute cone} together give $v_1>0$, and so $v\in\operatorname{cone}(E)$, as claimed.
Since our choice for $i\in[d]$ was arbitrary, it follows that the closed chamber $\overline{C}$ (which is generated by the columns of $B$) satisfies $U^\top\overline{C}\subseteq\operatorname{cone}(E)$.
To show that $Y$ is feasible in \eqref{eq.main dual}, i.e., $-B^\top YB\in\operatorname{COP}(d)$, take any vector $x\geq0$.
Then $Bx\in\overline{C}$, and so $v:=U^\top Bx\in\operatorname{cone}(E)$, meaning
\[
x^\top B^\top YBx
= x^\top B^\top UDU^\top Bx
= v^\top Dv
\leq 0,
\]
as desired.
\end{proof}

\begin{proof}[Proof of Lemma~\ref{lem.opt identity}]
By Lemma~\ref{lem.opt diag}, there exists an optimizer of \eqref{eq.main primal} that is diagonal, i.e., any optimizer $x$ of the restriction \eqref{eq.diagonal primal} corresponds to an optimizer $\operatorname{diag}(x)$ of \eqref{eq.main primal}.
Consider a further restriction of \eqref{eq.diagonal primal}:
\begin{equation}
\label{eq.scalar primal}
\text{minimize}
\quad
\|cBB^\top-I_d\|_{2\to2}
\quad
\text{subject to}
\quad
c\geq0.
\end{equation}
In what follows, we show that any optimizer $c$ of \eqref{eq.scalar primal} is strictly positive and corresponds to an optimizer $c\mathbf{1}$ of \eqref{eq.diagonal primal}.

First, the optimal $c$ in \eqref{eq.scalar primal} is strictly positive, since $c=0$ has value $1$, while $c=\sigma_{\max}(B)^{-2}$ has value strictly less than $1$ since the columns of $B$ span.
Next, the dual program of \eqref{eq.scalar primal} is\begin{equation}
\label{eq.scalar dual}
\text{maximize}
\quad
\operatorname{tr}Z
\quad
\text{subject to}
\quad
\|Z\|_*\leq1,
\quad
Z^\top=Z,
\quad
\operatorname{tr}(B^\top ZB)\leq 0.
\end{equation}
Since \eqref{eq.scalar primal} satisfies Slater's condition, we have strong duality, and so it suffices to show that an optimizer of the dual program \eqref{eq.scalar dual} is feasible in \eqref{eq.diagonal dual}.
To this end, we first give a proof of weak duality between \eqref{eq.scalar primal} and \eqref{eq.scalar dual}:
For every feasible $c$ in \eqref{eq.scalar primal} and $Z$ in \eqref{eq.scalar dual}, if we let $\lambda,\mu\in\mathbb{R}^d$ denote the vectors of sorted eigenvalues of $Z$ and $I_d-cBB^\top$, respectively, then symmetric von Neumann and H\"{o}lder together give
\begin{align*}
\operatorname{tr}Z
&\leq\operatorname{tr}Z-c\operatorname{tr}(B^\top ZB)\\
&=\langle Z,I_d-cBB^\top\rangle\\
&\leq\langle\lambda,\mu\rangle
\leq\|\lambda\|_1\|\mu\|_\infty
=\|Z\|_*\|cBB^\top-I_d\|_{2\to2}
\leq\|cBB^\top-I_d\|_{2\to2}.
\end{align*}
Equality in $\operatorname{tr}Z\leq\operatorname{tr}Z-c\operatorname{tr}(B^\top ZB)$ requires $\operatorname{tr}(B^\top ZB)=0$.
Next, by Proposition~\ref{prop.von neumann}, equality in $\langle Z,I_d-cBB^\top\rangle\leq\langle\lambda,\mu\rangle$ implies that $Z$ and $I_d-cBB^\top$ are simultaneously diagonalizable.
Equality in $\langle\lambda,\mu\rangle\leq\|\lambda\|_1\|\mu\|_\infty$ then gives that $\lambda_i\neq0$ only if $i\in\{1,d\}$.

Select any optimal $c$ in \eqref{eq.scalar primal} and optimal $Z$ in \eqref{eq.scalar dual}.
Let $\{u_i\}_{i=1}^d$ denote the orthonormal basis of eigenvectors of $I_d-cBB^\top$ afforded by Proposition~\ref{prop.von neumann}.
These eigenvectors appear in the singular value decomposition
\[
B=\sum_{i=1}^d \sigma_iu_iv_i^\top
\]
as well as the eigenvalue decomposition
\[
Z=\lambda_1u_1u_1^\top+\lambda_du_du_d^\top.
\]
Then $B^\top ZB=\sigma_1^2\lambda_1 v_1v_1^\top+\sigma_d^2\lambda_d v_dv_d^\top$, the trace of which is $\sigma_1^2\lambda_1+\sigma_d^2\lambda_d=0$ by the above complementary slackness argument.
Thus, $\operatorname{diag}(B^\top ZB)=0$ precisely when $(v_1)_i^2=(v_d)_i^2$ for all $i\in[d]$, which in turn holds by Lemma~\ref{lem.spectral phenomenon}(a).
As such, an optimizer of the dual program~\eqref{eq.scalar dual} is feasible in \eqref{eq.diagonal dual}, as desired.
\end{proof}

\section{Proof of Theorem~\ref{thm.asymptotic}}
\label{sec.proof of thm.asymptotic}

Theorem~\ref{thm.main} reports the best choice of max filtering templates in terms of the matrix whose unit-norm rows form a fundamental system of $G$.
Furthermore, the singular values of this matrix determine the optimal condition number.
Recalling Coxeter's classification, we compute these optimal condition numbers in Mathematica\footnote{\url{https://github.com/Daniel-Packer/reflection-groups/blob/main/ExceptionalGroups.nb}} for finitely many reflection groups.
The optimal condition numbers for the dihedral groups are easy to compute by hand:
\[
AA^\top
=\left[\begin{array}{cc}1&-\cos(\frac{\pi}{m})\\-\cos(\frac{\pi}{m})&1\end{array}\right]
\]
has eigenvalues $1\pm\cos(\frac{\pi}{m})$, and so $\kappa(I_m)=\sqrt{\frac{1+\cos(\frac{\pi}{m})}{1-\cos(\frac{\pi}{m})}}$, which is $(\frac{2}{\pi}+o(1))m$ by the Taylor series expansion of cosine.

The remaining families $A_\ell$, $B_\ell$, and $D_\ell$ correspond to $\ell\times\ell$ matrices whose characteristic polynomials are cumbersome to interact with as $\ell\to\infty$.
Instead, we determine the asymptotic form of the optimal condition numbers in these cases.
To do so, we embed the relevant matrices as integral operators over a common Hilbert space, where we can characterize spectral convergence as the matrix size grows.

\begin{definition}
The \textbf{pixel embedding} of $A\in\mathbb{R}^{n\times n}$ is $A^\sharp\in L^2([0,1]^2)$ defined by
\[
A^\sharp(x,y)
=\sum_{i=1}^n\sum_{j=1}^n A_{ij} \cdot n\cdot 1_{(\frac{i-1}{n},\frac{i}{n})}(x)\cdot 1_{(\frac{j-1}{n},\frac{j}{n})}(y),
\]
i.e., we chop the unit square into squares of width $1/n$, and then we take $A^\sharp$ to be constant on each square according to appropriately scaled entries of $A$.
We interpret $A^\sharp$ as the kernel of an integral operator $L^2([0,1])\to L^2([0,1])$, which we also denote $A^\sharp$ by an abuse of notation:
\[
(A^\sharp f)(x):=\int_0^1 A^\sharp(x,y)f(y)dy.
\]
\end{definition}

\begin{lemma}
\label{lem.limit norm}
Given a sequence $\{A_n\}_{n=1}^\infty$ of matrices of possibly different sizes such that $A_n^\sharp\to K$ in $L^2([0,1]^2)$, it holds that $\|A_n\|_{2\to2}\to\|K\|_{L^2\to L^2}$.
\end{lemma}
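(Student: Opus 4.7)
The plan is to identify $\|A_n\|_{2\to 2}$ with the operator norm of the integral operator $A_n^\sharp$, and then transfer convergence in $L^2$ norm of kernels to convergence in operator norm via the Hilbert--Schmidt estimate.

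First I would introduce the natural isometry $\iota\colon\mathbb{R}^n\to L^2([0,1])$ defined by
\[
\iota(v)(x):=\sum_{i=1}^n \sqrt{n}\,v_i\cdot 1_{(\frac{i-1}{n},\frac{i}{n})}(x),
\]
verify $\|\iota(v)\|_{L^2}=\|v\|_2$, and then compute directly from the definition of $A_n^\sharp$ that $A_n^\sharp\circ\iota=\iota\circ A_n$. A one-line check also shows that $A_n^\sharp$ annihilates $\iota(\mathbb{R}^n)^\perp$: any $g\in L^2([0,1])$ with $\int_{(j-1)/n}^{j/n} g=0$ for all $j$ is killed by $A_n^\sharp$ because $y$ only enters via the indicators $1_{(\frac{j-1}{n},\frac{j}{n})}(y)$. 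Consequently $A_n^\sharp=\iota\circ A_n\circ\iota^*$ as operators on $L^2([0,1])$, and therefore
\[
\|A_n^\sharp\|_{L^2\to L^2}=\|A_n\|_{2\to 2}.
\]

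Next I would invoke the standard identity that the Hilbert--Schmidt norm of the integral operator with kernel $K\in L^2([0,1]^2)$ equals $\|K\|_{L^2([0,1]^2)}$, together with the basic inequality $\|T\|_{\mathrm{op}}\leq\|T\|_{\mathrm{HS}}$. Applied to $A_n^\sharp-K$, this gives
\[
\bigl|\,\|A_n^\sharp\|_{L^2\to L^2}-\|K\|_{L^2\to L^2}\bigr|
\leq \|A_n^\sharp-K\|_{L^2\to L^2}
\leq \|A_n^\sharp-K\|_{L^2([0,1]^2)}\to 0,
\]
where the first inequality is the reverse triangle inequality for the operator norm. Combining with the identification in the previous paragraph yields $\|A_n\|_{2\to 2}\to\|K\|_{L^2\to L^2}$.

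There is no real obstacle: the only thing requiring care is the verification that $A_n^\sharp$ is block-diagonal with respect to the decomposition $L^2([0,1])=\iota(\mathbb{R}^n)\oplus\iota(\mathbb{R}^n)^\perp$, which reduces the operator norm of $A_n^\sharp$ to that of $A_n$. Everything else is a standard two-line appeal to Hilbert--Schmidt dominance of the operator norm.
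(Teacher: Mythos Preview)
Your proof is correct and follows essentially the same approach as the paper's. The paper phrases the first step as ``$A$ is the matrix representation of $A^\sharp|_{P_n}$ with respect to the basis of interval indicators, and $A^\sharp$ vanishes on $P_n^\perp$,'' which is your $A_n^\sharp=\iota\circ A_n\circ\iota^*$ in slightly different language; for the second step the paper invokes Cauchy--Schwarz where you invoke the Hilbert--Schmidt bound on the operator norm, and these are the same inequality.
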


\begin{proof}
First, we observe some relationships between $A\in\mathbb{R}^{n\times n}$ and its pixel embedding $A^\sharp$:
\begin{itemize}
\item
The operator $A^\sharp$ acts invariantly on the subspace $P_n\leq L^2([0,1])$ spanned by the characteristic functions of intervals $(\frac{i-1}{n},\frac{i}{n})$.
\item
$A$ is the matrix representation of the restriction $A^\sharp|_{P_n}$ with respect to this basis.
\item
The kernel of $A^\sharp$ contains the orthogonal complement of $P_n$ in $L^2([0,1])$.
\end{itemize}
Thus, $\|A\|_{2\to2}=\|A^\sharp\|_{L^2\to L^2}$.
The result then follows from Cauchy--Schwarz:
\[
\big|\|A_n^\sharp\|_{L^2\to L^2}-\|K\|_{L^2\to L^2}\big|
\leq\|A_n^\sharp-K\|_{L^2\to L^2}
=\sup_{\|f\|_{L^2}=1}\|(A_n^\sharp-K)f\|_{L^2}
\leq\|A_n^\sharp-K\|_{L^2([0,1]^2)}.
\qedhere
\]
\end{proof}

In the following subsections, we apply Lemma~\ref{lem.limit norm} to treat $A_\ell$, $B_\ell$, and $D_\ell$.
Unlike $B_\ell$ and $D_\ell$, the action of $A_\ell$ is more naturally represented in an $(\ell+1)$-dimensional space.
For this reason, our analysis of $A_\ell$ is more intricate, while our analysis of $B_\ell$ and $D_\ell$ is straightforward.

\subsection{$A_\ell$}

Fix $\ell\in\mathbb{N}$, put $d=\ell$, and select $A\in\mathbb{R}^{d\times d}$ with unit-norm rows that form a fundamental system for $A_\ell$.
In this subsection, we estimate
\[
\kappa(A_\ell)
=\frac{\sigma_{\max}(A)}{\sigma_{\min}(A)}
=\sqrt{\frac{\lambda_{\max}(AA^\top)}{\lambda_{\min}(AA^\top)}}
=\sqrt{\frac{2-\lambda_{\min}(AA^\top)}{\lambda_{\min}(AA^\top)}},
\]
where the last equality applies Lemma~\ref{lem.spectral phenomenon}(b).
It will be convenient to analyze
\[
M:=\left[\begin{array}{cc}
AA^\top&0\\
0^\top &1
\end{array}\right],
\]
which satisfies $\lambda_{\min}(M)=\lambda_{\min}(AA^\top)$ by Lemma~\ref{lem.spectral phenomenon}(b).
From the Coxeter diagram of $A_\ell$, we have that $AA^\top$ is tridiagonal with $1$'s on the diagonal and $-\frac{1}{2}$'s above and below the diagonal.
Define $S\in\mathbb{R}^{(d+1)\times(d+1)}$ by
\[
S:=\left[\begin{array}{rrr|c}
\frac{1}{\sqrt{2}}&&&\frac{1}{\sqrt{d+1}}\\
-\frac{1}{\sqrt{2}}&\ddots&&\frac{1}{\sqrt{d+1}}\\
&\ddots&\frac{1}{\sqrt{2}}&\vdots\\
&&-\frac{1}{\sqrt{2}}&\frac{1}{\sqrt{d+1}}\\
\end{array}\right],
\]
where the blank entries are zeros.
Then $S^\top S=M$, and furthermore, one may verify that
\[
(S^{-1})_{ij}
=\left\{\begin{array}{cl}
\sqrt{2}\cdot(1-\frac{i}{d+1})&\text{if }j\leq i\leq d\\
\frac{1}{\sqrt{d+1}}&\text{if }j\leq i=d+1\\
-\sqrt{2}\cdot\frac{i}{d+1}&\text{if }j>i.
\end{array}\right.
\]
We wish to estimate
\[
\sqrt{\lambda_{\min}(AA^\top)}
=\sqrt{\lambda_{\min}(M)}
=\sigma_{\min}(S)
=\sigma_{\max}(S^{-1})^{-1}
=\tfrac{1}{d+1}\cdot\|\tfrac{1}{d+1}S^{-1}\|_{2\to2}^{-1}.
\]
It is straightforward to verify that as $d\to\infty$, the pixel embedding $(\frac{1}{d+1}S^{-1})^\sharp$ converges in $L^2([0,1]^2)$ to the kernel $K$ defined by
\[
K(x,y):=\left\{\begin{array}{cl}
\sqrt{2}\cdot(1-x)&\text{if }y\leq x\\
-\sqrt{2}\cdot x&\text{if }y>x.
\end{array}\right.
\]
Lemma~\ref{lem.kernel norm} below gives that $\|K\|_{L^2\to L^2}=\sqrt{2}/\pi$, which combined with Lemma~\ref{lem.limit norm} implies
\[
(d+1)\cdot\sqrt{\lambda_{\min}(AA^\top)}
=\|\tfrac{1}{d+1}S^{-1}\|_{2\to2}^{-1}
\to\|K\|_{L^2\to L^2}^{-1}
=\tfrac{\pi}{\sqrt{2}},
\]
and so recalling $d=\ell$, we have
\[
\kappa(A_\ell)
=\sqrt{\frac{2-\lambda_{\min}(AA^\top)}{\lambda_{\min}(AA^\top)}}
=(\tfrac{2}{\pi}+o(1)) \ell,
\]
as desired.

\begin{lemma}
\label{lem.kernel norm}
For $K$ defined above, it holds that $\|K\|_{L^2\to L^2}=\sqrt{2}/\pi$.
\end{lemma}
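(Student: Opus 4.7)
The plan is to reduce $K$ to the classical Volterra operator, whose restricted norm can be computed by a Sturm--Liouville argument. Writing $F(x):=\int_0^x f(y)\,dy$ and splitting the defining integral at $y=x$, a direct computation gives
\[
(Kf)(x)=\sqrt{2}(1-x)F(x)-\sqrt{2}x\bigl(F(1)-F(x)\bigr)=\sqrt{2}\bigl[F(x)-xF(1)\bigr].
\]
Two features of this formula drive the analysis: $(Kf)(0)=(Kf)(1)=0$, and when $f$ is a constant $c$ we have $F(x)=cx$, so $Kf\equiv 0$. Decomposing $L^2([0,1])=\operatorname{span}(1)\oplus\operatorname{span}(1)^\perp$ and noting that $\int_0^1 g=0$ on the second summand forces $F(1)=0$, I would obtain $Kg=\sqrt{2}Vg$ on $\operatorname{span}(1)^\perp$, where $(Vf)(x):=\int_0^x f$ is the classical Volterra operator. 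Combined with $K\equiv 0$ on $\operatorname{span}(1)$, this yields
\[
\|K\|_{L^2\to L^2}^{2}=2\cdot\sup\bigl\{\|Vg\|_2^{2}:g\in\operatorname{span}(1)^\perp,\ \|g\|_2=1\bigr\}.
\]

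The main computational task is then this constrained supremum, which I would handle by Lagrange multipliers: at an optimizer, $V^*Vg=\lambda g+\mu\cdot 1$ together with $\langle g,1\rangle=0$. Starting from the explicit formula $(V^*Vg)(y)=(1-y)\int_0^y g(t)\,dt+\int_y^1(1-t)g(t)\,dt$, two differentiations give $(V^*Vg)''(y)=-g(y)$, while the boundary values are $(V^*Vg)'(0)=0$ and $(V^*Vg)(1)=0$. Substituting into the Lagrangian equation produces the ODE $\lambda g''+g=0$ on $[0,1]$ with Neumann condition $g'(0)=0$; the boundary equation at $y=1$ merely fixes $\mu=-\lambda g(1)$ and imposes no further condition on $g$. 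The general solution is $g(y)=A\cos(y/\sqrt{\lambda})$, and the orthogonality constraint $\int_0^1 g=0$ collapses to $\sin(1/\sqrt{\lambda})=0$, so $\lambda=1/(k\pi)^2$ for $k\in\mathbb{Z}_{>0}$. The largest value is $\lambda_1=1/\pi^2$ with eigenfunction $\cos(\pi y)$, yielding $\|K\|^2=2/\pi^2$ and therefore $\|K\|_{L^2\to L^2}=\sqrt{2}/\pi$.

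The main obstacle is justifying that the critical-point analysis really captures the supremum rather than merely a family of stationary values. This should follow from compactness of $V^*V$: the constrained Rayleigh quotient attains its supremum on the weakly closed unit sphere of $\operatorname{span}(1)^\perp$, so the maximizer must satisfy the Lagrangian condition, and it suffices to maximize over the discrete set of candidate eigenvalues produced by the ODE. A secondary care point is verifying that the direct-sum reduction is an equality: since $Kf$ depends only on the $\operatorname{span}(1)^\perp$-component of $f$, adding a constant to $f$ only increases $\|f\|_2$ while leaving $\|Kf\|_2$ unchanged, so the supremum defining $\|K\|^2$ is indeed attained on $\operatorname{span}(1)^\perp$.
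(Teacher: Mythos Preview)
Your argument is correct and lands on the same Sturm--Liouville problem as the paper, but through a different factorization. The paper works directly with $KK^*$: it computes $(KK^*f)''=-2f$, reads off the Dirichlet conditions $(KK^*f)(0)=(KK^*f)(1)=0$, and obtains eigenfunctions $\sin(k\pi x)$ with eigenvalues $2/(k\pi)^2$. You instead recognize that $K=\sqrt{2}\,V\Pi$, where $V$ is the Volterra operator and $\Pi$ is the orthogonal projection onto mean-zero functions, and then analyze the compression $\Pi V^*V$ on $\operatorname{span}(1)^\perp$ via Lagrange multipliers, arriving at a Neumann-plus-orthogonality problem with eigenfunctions $\cos(k\pi y)$. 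The two pictures are dual: your right singular vectors $\cos(k\pi y)$ are sent by $V$ to the paper's left singular vectors $\sin(k\pi x)$. Your factorization is illuminating and dovetails with the paper's later treatment of $B_\ell$ and $D_\ell$, where the Volterra norm $2/\pi$ is quoted directly; the paper's route is marginally shorter because it avoids the decomposition and the multiplier apparatus. One small wording fix: the unit sphere is not weakly closed, but your existence argument is fine once rephrased---$\Pi V^*V|_{\operatorname{span}(1)^\perp}$ is a compact self-adjoint operator on a Hilbert space, so its top eigenvalue equals the constrained supremum and is attained.
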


\begin{proof}
We start by identifying all eigenvalues and eigenvectors of $KK^*$.
To this end, a straightforward calculation gives
\[
(KK^*f)(x)
=2\int_0^x\int_y^1 f(z)dzdy-2x\int_0^1\int_y^1 f(z)dzdy.
\]
Observe that $(KK^*f)(0)=0=(KK^*f)(1)$ for any $f\in L^2([0,1])$, and so any eigenvector $g$ of $KK^*$ with nonzero eigenvalue must satisfy the boundary conditions
\[
g(0)=0=g(1).
\]
Next, two applications of the fundamental theorem of calculus gives $(KK^*f)''=-2f$, meaning $g$ is an eigenvector with eigenvalue $\lambda$ only if $\lambda g''=-2g$.
Notably, this implies that $0$ is not an eigenvalue of $KK^*$.
Since $KK^*$ is positive definite, it follows that every eigenvalue is strictly positive.
As such, $g$ is an eigenvector with eigenvalue $\lambda>0$ only if $g''=-\omega^2 g$ with $\omega:=\sqrt{2/\lambda}$, i.e., there exist $a,b\in\mathbb{R}$ such that
\[
g(x)=a\cos(\omega x)+b\sin(\omega x).
\]
The boundary condition $g(0)=0$ then implies that $a=0$, and since $g$ is an eigenvector, it follows that $b\neq0$.
The boundary condition $g(1)=0$ then gives $b\sin(\omega)=0$, i.e., $\omega=k\pi$ for some $k\in\mathbb{N}$.
Thus, every eigenvalue of $KK^*$ takes the form $2/(k\pi)^2$ for some $k\in\mathbb{N}$.
Furthermore, an easy calculation confirms that $g(x):=\sin(\pi x)$ satisfies $KK^*g=(2/\pi^2)g$.
The $C^*$ identity and the spectral theorem for compact self-adjoint operators then gives $\|K\|_{L^2\to L^2}^2=\lambda_{\max}(KK^*)=2/\pi^2$.
\end{proof}

\subsection{$B_\ell$ and $D_\ell$}

First, we address $B_\ell$.
Fix $\ell\in\mathbb{N}$, put $d=\ell$, and consider $A,B\in\mathbb{R}^{d\times d}$ defined by
\[
A:=
\left[\begin{array}{cccc}
\frac{1}{\sqrt{2}}&-\frac{1}{\sqrt{2}}&\phantom\ddots&\\
&\ddots&\ddots&\\
&\phantom\ddots&\frac{1}{\sqrt{2}}&-\frac{1}{\sqrt{2}}\\
&&\phantom\ddots&1
\end{array}\right],
\qquad
B:=
\left[\begin{array}{cccc}
\sqrt{2}&\cdots&\sqrt{2}&1\\
&\ddots&\vdots&\vdots\\
&&\sqrt{2}&1\\
&&&1
\end{array}\right].
\]
The rows of $A$ form a fundamental system for $B_\ell$, and $B=A^{-1}$.
As $d\to\infty$, the pixel embedding $(\frac{1}{d}B)^\sharp$ converges in $L^2([0,1]^2)$ to $K$ defined by
\[
K(x,y):=\left\{\begin{array}{cl}
\sqrt{2}&\text{if }y\geq x\\
0&\text{if }y<x.
\end{array}\right.
\]
The operator $\frac{1}{\sqrt{2}}K$ is the adjoint of the Volterra operator, whose operator norm is well known to be $2/\pi$ (by an argument similar to our analysis of $A_\ell$).
Then
\[
\lambda_{\min}(AA^\top)
=\lambda_{\max}(B^\top B)^{-1}
=\tfrac{1}{d^2}\cdot\|\tfrac{1}{d}B\|_{2\to2}^{-2}
=\tfrac{1}{d^2}\Big(\|K\|_{2\to2}^{-2}+o(1)\Big)
=\tfrac{1}{d^2}(\tfrac{\pi^2}{8}+o(1)),
\]
and so recalling $d=\ell$, we have
\[
\kappa(B_\ell)
=\sqrt{\frac{2-\lambda_{\min}(AA^\top)}{\lambda_{\min}(AA^\top)}}
=\sqrt{\frac{2-\tfrac{1}{d^2}(\tfrac{\pi^2}{8}+o(1))}{\tfrac{1}{d^2}(\tfrac{\pi^2}{8}+o(1))}}
=(\tfrac{4}{\pi}+o(1))\ell.
\]
To analyze $D_\ell$, we instead take
\[
A:=
\left[\begin{array}{cccr}
\frac{1}{\sqrt{2}}&-\frac{1}{\sqrt{2}}&\phantom\ddots&\\
&\ddots&\ddots&\\
&\phantom\ddots&\frac{1}{\sqrt{2}}&-\frac{1}{\sqrt{2}}\\
&\phantom\ddots&\frac{1}{\sqrt{2}}&\frac{1}{\sqrt{2}}
\end{array}\right],
\qquad
B:=
\left[\begin{array}{cccrc}
\sqrt{2}&\cdots&\sqrt{2}&\frac{1}{\sqrt{2}}&\frac{1}{\sqrt{2}}\\
&\ddots&\vdots&\vdots\phantom{~}&\vdots\\
&&\sqrt{2}&\frac{1}{\sqrt{2}}&\frac{1}{\sqrt{2}}\\
&&\phantom\vdots&-\frac{1}{\sqrt{2}}&\frac{1}{\sqrt{2}}
\end{array}\right].
\]
The rows of $A$ form a fundamental system for $D_\ell$, and $B=A^{-1}$.
As $d\to\infty$, the pixel embedding $(\frac{1}{d}B)^\sharp$ converges in $L^2([0,1]^2)$ to the same kernel $K$ defined above, and so the identical argument gives $\kappa(D_\ell)=(\tfrac{4}{\pi}+o(1))\ell$.

\section*{Acknowledgments}

The authors thank Jameson Cahill, Joey Iverson, John Jasper, and Yousef Qaddura for enlightening discussions.

\appendix

\section{Proof of Lemma~\ref{lem.spectral phenomenon}}
\label{appendix.spectral phenomenon}

\subsection{Proof of Lemma~\ref{lem.spectral phenomenon}(b) and~(c)}

The exceptional cases are treated in the Mathematica same notebook from Section~\ref{sec.proof of thm.asymptotic}.
In the case $G=I_m$, the eigenvalues of $M=-\cos(\frac{\pi}{m})\cdot[\begin{smallmatrix}0&1\\1&0\end{smallmatrix}]$ are $\pm\cos(\frac{\pi}{m})$, while $M^{(1)}=M^{(2)}=0$.
For the remaining cases where $G\in\{A_\ell,B_\ell,D_\ell\}$, we show that the characteristic polynomials of $M$ and $M^{(j)}$ are either even or odd.
Due to the form of these matrices, it is more convenient to instead work with $2M$ and $2M^{(j)}$.

First, suppose $G=A_\ell$ for $\ell\in\mathbb{N}$ and denote
\[
M(A_\ell)
:=M,
\qquad
P_\ell(\lambda)
:=\operatorname{det}(\lambda I_\ell-2M(A_\ell)).
\]
If $\ell=1$, then $M(A_\ell)=0$ and $P_1(\lambda)=\lambda$ is odd.
If $\ell=2$, then $G=I_3$ and $P_2(\lambda)=\lambda^2-1$ is even.
Otherwise, we have
\begin{equation}
\label{eq.matrix recursion}
\lambda I_\ell-2M(A_{\ell})=\left[\begin{array}{ccc}
\lambda&1&0_{\ell-2}^\top\\
1&\lambda&e_1^\top\\
0_{\ell-2}&e_1&\lambda I_{\ell-2}-2M(A_{\ell-2})
\end{array}\right],
\end{equation}
and so the cofactor expansion across the first row gives
\begin{align}
P_\ell(\lambda)
\nonumber
&=\lambda\operatorname{det}\left[\begin{array}{cc}
\lambda&e_1^\top\\
e_1&\lambda I_{\ell-2}-2M(A_{\ell-2})
\end{array}\right]
-\operatorname{det}\left[\begin{array}{cc}
1&e_1^\top\\
0_{\ell-2}&\lambda I_{\ell-2}-2M(A_{\ell-2})
\end{array}\right]\\
\label{eq.poly recursion}
&=\lambda P_{\ell-1}(\lambda)-P_{\ell-2}(\lambda).
\end{align}
By induction, it follows that $P_\ell(\lambda)$ is odd (even) precisely when $\ell$ is odd (even).
This proves Lemma~\ref{lem.spectral phenomenon}(b) in this case.
Meanwhile, Lemma~\ref{lem.spectral phenomenon}(c) in this case follows from the fact that
\[
\lambda I_{\ell-1}-2M(A_{\ell})^{(j)}=\left[\begin{array}{cc}
\lambda I_{j-1}-2M(A_{j-1})&0\\
0&\lambda I_{\ell-j}-2M(A_{\ell-j})
\end{array}\right],
\]
whose determinant is a product of odd and/or even functions of $\lambda$.

Next, suppose $G=B_\ell$ for $\ell\in\mathbb{N}\setminus\{1\}$ and denote
\[
M(B_\ell)
:=M,
\qquad
Q_\ell(\lambda)
:=\operatorname{det}(\lambda I_\ell-2M(B_\ell)).
\]
If $\ell=2$, then $G=I_4$ and $Q_2(\lambda)=\lambda^2-2$ is even.
If $\ell=3$, then
\[
Q_3(\lambda)
=\operatorname{det}\left[\begin{array}{ccc}
\lambda&1&0\\
1&\lambda&\sqrt{2}\\
0&\sqrt{2}&\lambda
\end{array}\right]
=\lambda^3-3\lambda
\]
is odd.
Otherwise, the same recursions \eqref{eq.matrix recursion} and \eqref{eq.poly recursion} hold for $M(B_\ell)$ and $Q_\ell$.
By induction, it follows that $Q_\ell(\lambda)$ is odd (even) precisely when $\ell$ is odd (even).
This proves Lemma~\ref{lem.spectral phenomenon}(b) in this case.
Meanwhile, denoting $X_1:=M(A_1)$ and otherwise $X_\ell:=M(B_\ell)$, Lemma~\ref{lem.spectral phenomenon}(c) in this case follows from the fact that
\[
\lambda I_{\ell-1}-2M(B_{\ell})^{(j)}=\left[\begin{array}{cc}
\lambda I_{j-1}-2M(A_{j-1})&0\\
0&\lambda I_{\ell-j}-2X_{\ell-j}
\end{array}\right],
\]
whose determinant is a product of odd and/or even functions of $\lambda$.

Finally, suppose $G=D_\ell$ for $\ell\in\mathbb{N}\setminus\{1,2\}$ and denote
\[
M(D_\ell)
:=M,
\qquad
R_\ell(\lambda)
:=\operatorname{det}(\lambda I_\ell-2M(D_\ell)).
\]
If $\ell=3$, then $G=A_3$ and so \eqref{eq.poly recursion} gives that $R_3(\lambda)=P_3(\lambda)$ is odd.
If $\ell=4$, then
\[
R_4(\lambda)
=\operatorname{det}\left[\begin{array}{cccc}
\lambda&1&0&0\\
1&\lambda&1&1\\
0&1&\lambda&0\\
0&1&0&\lambda
\end{array}\right]
=\lambda^4-3\lambda^2
\]
is even.
Otherwise, the same recursions \eqref{eq.matrix recursion} and \eqref{eq.poly recursion} hold for $M(D_\ell)$ and $R_\ell$.
By induction, it follows that $R_\ell(\lambda)$ is odd (even) precisely when $\ell$ is odd (even).
This proves Lemma~\ref{lem.spectral phenomenon}(b) in this case.
Meanwhile, denoting $Y_1:=M(A_1)$, $Y_2:=0\in\mathbb{R}^{2\times 2}$, and otherwise $Y_\ell:=M(D_\ell)$, Lemma~\ref{lem.spectral phenomenon}(c) in this case follows from the fact that
\[
\lambda I_{\ell-1}-2M(D_{\ell})^{(j)}=\left[\begin{array}{cc}
\lambda I_{j-1}-2M(A_{j-1})&0\\
0&\lambda I_{\ell-j}-2Y_{\ell-j}
\end{array}\right],
\]
whose determinant is a product of odd and/or even functions of $\lambda$.

\subsection{Proof of Lemma~\ref{lem.spectral phenomenon}(a)}

First, $AA^\top$ has the same eigenvectors as $M:=AA^\top-I_d$, and so we can combine Lemma~\ref{lem.spectral phenomenon}(b) and~(c) with the eigenvector--eigenvalue identity described in~\cite{DentonPTZ:21}:
\[
(v_1)_j^2
=\frac{\displaystyle\prod_{k=1}^{d-1}\Big|\lambda_1(M)-\lambda_k(M^{(j)})\Big|}{\displaystyle\prod_{k=2}^d\Big|\lambda_1(M)-\lambda_k(M)\Big|},
\qquad
(v_d)_j^2
=\frac{\displaystyle\prod_{k=1}^{d-1}\Big|\lambda_d(M)-\lambda_k(M^{(j)})\Big|}{\displaystyle\prod_{k=1}^{d-1}\Big|\lambda_d(M)-\lambda_k(M)\Big|}.
\]
The usual limiting argument allows us to assume that the denominators above are nonzero without loss of generality.
By Lemma~\ref{lem.spectral phenomenon}(b) and~(c), the numerators are equal (as are the denominators):
\[
\prod_{k=1}^{d-1}\Big|\lambda_1(M)-\lambda_k(M^{(j)})\Big|
=\prod_{k=1}^{d-1}\Big|-\lambda_d(M)+\lambda_{d-k}(M^{(j)})\Big|
=\prod_{k=1}^{d-1}\Big|\lambda_d(M)-\lambda_{k}(M^{(j)})\Big|,
\]
\[
\prod_{k=2}^d\Big|\lambda_1(M)-\lambda_k(M)\Big|
=\prod_{k=2}^d\Big|-\lambda_d(M)+\lambda_{d-k+1}(M)\Big|
=\prod_{k=1}^{d-1}\Big|\lambda_d(M)-\lambda_{k}(M)\Big|,
\]
where in both cases the last step follows from reindexing.

\end{document}